\numberwithin{equation}{section}
\def\subsection{\@startsection{subsection}{2}%
  \z@{0.0\linespacing}{-.5em}%
  {\normalfont\bfseries}}
\newtheorem{theorem}{Theorem}[section]
\newtheorem{proposition}[theorem]{Proposition}
\newtheorem{corollary}[theorem]{Corollary}
\newtheorem{definition}[theorem]{Definition}
\newtheorem{lemma}[theorem]{Lemma}
\theoremstyle{remark}
\newtheorem{remark}[theorem]{Remark}
\DeclareMathOperator{\supp}{supp\,}
\long\def\symbolfootnote[#1]#2{\begingroup%
\def\thefootnote{\fnsymbol{footnote}}\footnote[#1]{#2}\endgroup} 
\begin{document}
\author{Aynur Bulut}\address{The University of Texas at Austin, 1 University Station C1200, Austin, TX 78712}\email{abulut@math.utexas.edu}
\title[The radial defocusing energy-supercritical cubic NLW in $\mathbb{R}^{1+5}$]{The radial defocusing energy-supercritical cubic nonlinear wave equation in $\mathbb{R}^{1+5}$}
\thanks{\today}
\begin{abstract}
In this work, we consider the energy-supercritical defocusing cubic nonlinear wave equation in dimension $d=5$ for radially symmetric initial data.  We prove that an a priori bound in the critical space implies global well-posedness and scattering.  The main tool that we use is a frequency localized version of the classical Morawetz inequality, inspired by recent developments in the study of the mass and energy critical nonlinear Schr\"odinger equation.
\end{abstract}

\maketitle

\section{Introduction}
In this paper, we continue our study of the energy-supercritical defocusing cubic nonlinear wave equation, initiated in \cite{BulutCubic}.  In that work, our goal was to prove that an a priori bound in the critical space leads to global well-posedness and scattering for solutions to the initial value problem
\begin{align*}
(IVP)\quad\left\lbrace\begin{array}{rl}u_{tt}-\Delta u+|u|^2u&=0,\\
(u(0),u_t(0))&=(u_0,u_1)\in \dot{H}_x^{s_c}(\mathbb{R}^d)\times\dot{H}_x^{s_c-1}(\mathbb{R}^d),
\end{array}\right.
\end{align*}
in dimensions $d\geq 6$, where $s_c=\frac{d-2}{2}$ and $u:I\times\mathbb{R}^d\rightarrow\mathbb{R}$ with $0\in I\subset \mathbb{R}$.  There is a natural scaling associated to (IVP).  That is to say, if we define
\begin{align*}
u_\lambda(t,x):=\lambda u(\lambda t,\lambda x)
\end{align*}
then the map $u\mapsto u_\lambda$ carries the set of solutions of (IVP) to itself.  Moreover, this map preserves the $\dot{H}_x^{s_c}\times\dot{H}_x^{s_c-1}$ norm of the initial data, and therefore the space $\dot{H}_x^{s_c}\times\dot{H}_x^{s_c-1}$ is referred to as the {\it critical space} with respect to the scaling.  We also recall that solutions to (IVP) conserve the {\it energy},
\begin{align*}
E(u(t),u_t(t))=\int_{\mathbb{R}^d} \frac{1}{2}|\nabla u(t)|^2+\frac{1}{2}|u_t(t)|^2+\frac{1}{4}|u(t)|^4dx,
\end{align*}
which is finite for solutions to (IVP) when $s_c=1$.  In view of this, we call (IVP) {\it energy-supercritical} when $s_c>1$, that is $d\geq 5$.

In \cite{BulutCubic}, our study to prove global well-posedness for (IVP) proceeded by making use of the concentration compactness approach introduced by Kenig and Merle \cite{KenigMerleNLS,KenigMerleNLW}, reducing the question to an analysis of three specific blowup scenarios as in \cite{KV3,KVradial}.  The key part of this analysis was to show that in each of these scenarios solutions have finite energy, for which a major tool was the double Duhamel technique \cite{CKSTT,KillipVisanECritical,KillipVisanSupercriticalNLS}.  This was the source of the restriction in our considerations to dimensions $d\geq 6$.

In the present work, we extend this result to include dimension $d=5$ in the case of radial initial data.  More precisely, we consider
\begin{align*}
(NLW)\quad\left\lbrace\begin{array}{rl}u_{tt}-\Delta u+|u|^2u&=0,\\
(u(0),u_t(0))&=(u_0,u_1)\in \dot{H}_x^{3/2}(\mathbb{R}^5)\times\dot{H}_x^{1/2}(\mathbb{R}^5),
\end{array}\right.
\end{align*}
where $u:I\times\mathbb{R}^5\rightarrow\mathbb{R}$, $0\in I\subset \mathbb{R}$, and $(u_0,u_1)$ is radially symmetric.  

We will use the following notion of solution to (NLW):
\begin{definition}
\label{def_solution}
We say that $u:I\times\mathbb{R}^5\rightarrow\mathbb{R}$ with $0\in I\subset\mathbb{R}$ is a {\it solution} to (NLW) if $(u,u_t)$ belongs to $C_t(K;\dot{H}_x^{3/2}\times\dot{H}_x^{1/2})\cap L_{t,x}^{6}(K\times\mathbb{R}^5)$ for every $K\subset I$ compact and $u$ satisfies the {\it Duhamel formula}
\begin{align*}
u(t)&=\cos(t|\nabla|)u_0+\frac{\sin(t|\nabla|)}{|\nabla|}u_1+\int_0^t \frac{\sin((t-t')|\nabla|)}{|\nabla|}F(u(t'))dt'
\end{align*}
for every $t\in I$.
\end{definition}
We now state the main result of this paper:
\begin{theorem}
\label{thm_main}
Suppose $u:I\times\mathbb{R}^5\rightarrow\mathbb{R}$ is a solution to (NLW) with radial initial data, maximal interval of existence $I\subset \mathbb{R}$, and satisfying the a priori bound
\begin{align*}
(u,u_t)\in L_t^\infty(I;\dot{H}_x^{3/2}\times\dot{H}_x^{1/2}).
\end{align*}
Then $u$ is global and 
\begin{align*}
\lVert u\rVert_{L_{t,x}^6(\mathbb{R}\times\mathbb{R}^5)}\leq C
\end{align*}
for some constant $C=C(\lVert (u,u_t)\rVert_{L_t^\infty (\dot{H}_x^{3/2}\times\dot{H}_x^{1/2})})$.  Furthermore, $u$ scatters both forward and backward in time.
\end{theorem}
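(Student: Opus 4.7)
The plan is to implement the Kenig--Merle concentration-compactness and rigidity framework, mirroring the approach of \cite{BulutCubic} but replacing the double Duhamel step (which forced $d\geq 6$ there) by a frequency-localized Morawetz inequality in the spirit of recent work on mass- and energy-critical NLS. Radial symmetry plays a central role: it eliminates the translation parameter in the profile decomposition and supplies the weighted radial Sobolev estimates specific to $\mathbb{R}^5$ that will be used to tame the error terms introduced by frequency cutoffs.

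I would proceed in three main stages. First, develop the local theory at the critical regularity $s_c=3/2$: small-data global well-posedness and scattering in $\dot H_x^{3/2}\times\dot H_x^{1/2}$ via Strichartz estimates and a contraction in $L_{t,x}^6$, together with a stability/perturbation lemma. Second, assume the theorem fails and construct a minimal blowup solution: take a sequence $u_n$ with bounded critical norm but $\|u_n\|_{L_{t,x}^6}\to\infty$, apply a linear profile decomposition for $\cos(t|\nabla|)$ and $\frac{\sin(t|\nabla|)}{|\nabla|}$ in $\dot H_x^{3/2}\times\dot H_x^{1/2}$, and use the perturbation lemma to isolate a single profile as a minimal nonscattering solution $u_*$. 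A Palais--Smale--type compactness argument then yields almost periodicity modulo scaling, i.e.\ a frequency function $N:I\to(0,\infty)$ such that $\{N(t)^{-3/2}u_*(t,\,\cdot\,/N(t))\}_{t\in I}$ is precompact in $\dot H_x^{3/2}\times\dot H_x^{1/2}$. Third, following the standard trichotomy of enemies — finite-time blowup, low-to-high frequency cascade, and soliton-like behavior — rule each out. Finite-time blowup is precluded by finite propagation speed together with concentration estimates, while the cascade case is excluded by first upgrading $u_*$ to finite energy via a Strichartz bootstrap and then invoking energy conservation against $N(t)\to 0$.

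The decisive new step is ruling out the soliton-like scenario $N(t)\sim 1$ via a frequency-localized Morawetz inequality. The classical Morawetz identity controls $\int_I\int_{\mathbb{R}^5}|u|^4/|x|\,dx\,dt$ by a quantity at $\dot H^{1/2}$ regularity, which is a priori infinite for our solutions since they are merely in $\dot H^{3/2}$. The remedy is to apply the identity to the high-frequency truncation $u_{\geq N_0}:=P_{\geq N_0}u_*$, producing a finite right-hand side at the cost of commutator and paraproduct errors coming from the interaction of $P_{\geq N_0}$ with $|u_*|^2u_*$. These errors should be handled with Littlewood--Paley theory, Strichartz estimates, the almost periodicity (which forces uniform-in-$t$ smallness of the high-frequency tails on scales $\gg 1$), and radial weighted Sobolev estimates available in $\mathbb{R}^5$. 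Pairing the resulting upper bound with a lower bound $\int_I\int_{\mathbb{R}^5}|u_{\geq N_0}|^4/|x|\,dx\,dt\gtrsim|I|$, derived from compactness modulo scaling together with $N(t)\sim 1$, yields $|I|<\infty$ and contradicts $I=\mathbb{R}$.

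The main obstacle I expect is the execution of the frequency-localized Morawetz estimate itself: producing sharp bilinear and trilinear bounds so that the commutator and paraproduct errors are controlled by the compactness and can be absorbed using the gain coming from the frequency cutoff at $N_0$. Secondary technical difficulties appear in the regularity upgrade for the cascade case (pushing $u_*$ from $\dot H^{3/2}$ down to $\dot H^1$ across a range of exponents lying above the energy requires a careful bootstrap), and in checking that every ingredient of the concentration-compactness machinery — profile decomposition, perturbation theory, and reduction to an almost periodic solution — functions correctly at the scaling-critical wave regularity $s_c=3/2$ in dimension five with radial symmetry.
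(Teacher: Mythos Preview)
Your overall strategy is correct and you have identified the frequency-localized Morawetz estimate as the decisive new tool. There is, however, a genuine gap in your treatment of the cascade scenario: you propose to rule it out by ``upgrading $u_*$ to finite energy via a Strichartz bootstrap and then invoking energy conservation against $N(t)\to 0$.'' Proving finite energy for the minimal blowup solution is precisely the step that does not close for the cubic nonlinearity in $d=5$; it is the reason \cite{BulutCubic} was restricted to $d\geq 6$ (where the double Duhamel argument works) and the reason \cite{KVradial} excludes $p=2$ in five dimensions. The paper's explicit point is to \emph{bypass} the finiteness of energy entirely, so routing the cascade case through a regularity upgrade to $\dot H^1$ reintroduces exactly the obstruction the paper is designed to avoid.

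The paper's remedy is organizational but essential: instead of the trichotomy, it reduces to a dichotomy. A rescaling argument arranges $N(t)\geq 1$ on $I^+$, with $N$ piecewise constant on characteristic intervals $J_k$ of length $\delta N_k^{-1}$, and the only two cases are $|I^+|<\infty$ and $|I^+|=\infty$. The first is handled as you describe. For the second --- which subsumes both your cascade and soliton scenarios once $N(t)\geq 1$ is enforced --- the frequency-localized Morawetz inequality gives $\int_{I_0}\int |u_{\geq N}|^4/|x|\,dx\,dt\lesssim_u \eta(N^{-1}+|I_0|)$ for $N$ small, while summing a concentration lower bound over the $J_k$ gives $\gtrsim_u |I_0|$; taking $\eta$ small contradicts $|I^+|=\infty$. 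No finite-energy step enters anywhere. A secondary correction: the Morawetz error terms are not controlled by radial weighted Sobolev estimates but by a separate frequency-localized Strichartz inequality (itself proved by an induction on the frequency threshold), and radiality is used only to fix $x(t)\equiv 0$.
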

As in the high dimensional case treated in \cite{BulutCubic}, our proof of Thoerem $\ref{thm_main}$ is a proof by contradiction following the concentration compactness approach of Kenig and Merle.  The key ingredient which allows us to work in dimension five is the use of a frequency localized version of the Morawetz estimate, inspired by recent progress in the study of the global well-posedness problem for the mass and energy critical nonlinear Schr\"odinger equation \cite{Dodson,KV3nls,Visan4}.  Equipped with this inequality and the assumption of radial symmetry, we bypass the need to prove the finiteness of energy.  We plan to address the case of general initial data in dimension five in a future work.

We note that the problem (IVP) in the radial energy-supercritical setting has recently been treated by Killip and Visan in \cite{KVradial}, for a range of nonlinearities $|u|^pu$ dependent on the dimension.  In that work, the restriction on $p$ corresponding to five spatial dimensions is $\frac{4}{3}<p<2$, excluding the cubic case treated in the present work.

We also recently learned that Kenig and Merle have treated the defocusing energy-supercritical NLW with the quintic nonlinearity and radial data in all odd dimensions \cite{KenigMerleOddDimensions}.

We now recall the definition of the class of almost periodic solutions.
\begin{definition}
A solution $u$ to (NLW) with time interval $I$ is said to be almost periodic modulo symmetries if $(u,u_t)\in L_t^\infty(I;\dot{H}_x^{3/2}\times\dot{H}_x^{1/2})$ and there exist functions $N:I\rightarrow\mathbb{R}^+$, $x:I\rightarrow\mathbb{R}^5$ and $C:\mathbb{R}^+\rightarrow\mathbb{R}^+$ such that for all $t\in I$ and $\eta>0$,
\begin{align*}
\int_{|x-x(t)|\geq C(\eta)/N(t)} ||\nabla|^{3/2}u(t,x)|^2+||\nabla|^{1/2}u_t(t,x)|^2dx&\leq \eta,
\end{align*}
and
\begin{align*}
\int_{|\xi|\geq C(\eta)N(t)} |\xi|^{3}|\hat{u}(t,\xi)|^2+|\xi|\,|\hat{u}_t(t,\xi)|^2d\xi&\leq \eta.
\end{align*}
\end{definition}
We next remark a consequence of the notion of almost periodicity.
\begin{remark}
\label{rem_apalt}
The property of almost periodicity is equivalent to the following condition: there exist functions $N:I\rightarrow \mathbb{R}^+$ and $x:I\rightarrow\mathbb{R}^5$ such that the set
\begin{align}
K&=\{(\frac{1}{N(t)}u(t,x(t)+\frac{x}{N(t)}),\,\frac{1}{N(t)^{2}}u_t(t,x(t)+\frac{x}{N(t)})):t\in I\},\label{cptness}
\end{align}
has compact closure in $\dot{H}_x^{3/2}(\mathbb{R}^5)\times\dot{H}_x^{1/2}(\mathbb{R}^5)$.  In particular, if $u$ is almost periodic, then for every $\eta>0$ there exists $C(\eta)>0$ such that for all $t\in I$,
\begin{align*}
\int_{|x-x(t)|\geq C(\eta)/N(t)} |\nabla u(t,x)|^{5/2}dx+\int_{|x-x(t)|\geq C(\eta)/N(t)} |u_t(t,x)|^{5/2}dx&\leq \eta.
\end{align*}
\end{remark}

With this definition in hand, we are now ready to outline our strategy for proving Theorem $\ref{thm_main}$.  We first recall the following result due to Kenig and Merle \cite{KM}, which shows that the failure of Theorem $\ref{thm_main}$ gives the existence of a minimal counterexample which belongs to the class of almost periodic solutions.
\begin{theorem}\cite{KM}
\label{thm_minimal}
Suppose that Theorem $\ref{thm_main}$ failed.  Then there exists a radial solution $u:I\times\mathbb{R}^5\rightarrow\mathbb{R}$ to (NLW) with maximal interval of existence $I$,
\begin{align*}
(u,u_t)\in L_t^\infty(I;\dot{H}_x^{3/2}\times\dot{H}_x^{1/2}),\quad\textrm{and}\quad \lVert u\rVert_{L_{t,x}^{6}(I\times\mathbb{R}^5)}=\infty
\end{align*}
such that $u$ is a minimal blow-up solution in the following sense: for any solution $v$ with maximal interval of existence $J$ such that $\lVert v\rVert_{L_{t,x}^{6}(J\times\mathbb{R}^d)}=\infty$, we have
\begin{align*}
\sup_{t\in I} \lVert (u(t),u_t(t))\rVert_{\dot{H}_x^{3/2}\times\dot{H}_x^{1/2}}&\leq \sup_{t\in J} \lVert (v(t),v_t(t))\rVert_{\dot{H}_x^{3/2}\times\dot{H}_x^{1/2}}.
\end{align*}
Moreover, $u$ is almost periodic modulo symmetries.
\end{theorem}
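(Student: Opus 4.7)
The plan is to invoke the concentration-compactness reduction of Kenig and Merle \cite{KM}, arguing by contradiction. If Theorem \ref{thm_main} fails, I would first introduce the critical threshold
\[
E_c=\inf\Bigl\{E\geq 0:\sup\bigl\{\lVert u\rVert_{L_{t,x}^6}:u\text{ a radial solution to (NLW) with }\sup_{t\in I}\lVert(u,u_t)\rVert_{\dot H^{3/2}\times\dot H^{1/2}}\leq E\bigr\}=\infty\Bigr\},
\]
which is strictly positive by the small data local theory (Strichartz estimates plus contraction mapping at the critical regularity). Failure of Theorem \ref{thm_main} forces $E_c<\infty$, and one then extracts a minimizing sequence of radial solutions $u_n$ with maximal intervals $I_n\ni 0$ such that $\sup_{t\in I_n}\lVert(u_n,\partial_t u_n)\rVert_{\dot H^{3/2}\times\dot H^{1/2}}\to E_c$ and $\lVert u_n\rVert_{L_{t,x}^6(I_n\times\mathbb{R}^5)}\to\infty$.

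Next, I would apply a Bahouri--G\'erard type linear profile decomposition for the free wave propagator to the initial data $(u_n(0),\partial_t u_n(0))$ in $\dot H^{3/2}\times\dot H^{1/2}$. Radiality of the data eliminates the spatial translation parameters and leaves only scaling and time-translation parameters in the decomposition. To each linear profile I would associate a nonlinear profile, obtained by solving (NLW) with the matching scattering data, and a perturbation/stability lemma for (NLW) would then approximate $u_n$ by the superposition of these nonlinear profiles up to an error that is small in $L_{t,x}^6$. The asymptotic orthogonality of the profiles, combined with the minimality of $E_c$, yields a pigeonhole dichotomy: if two or more profiles were nontrivial, each would have critical norm strictly below $E_c$ and hence finite $L_{t,x}^6$ norm, and the stability lemma would bound $\lVert u_n\rVert_{L_{t,x}^6}$ uniformly, contradicting the minimizing property. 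Thus exactly one profile survives, producing a radial solution $u$ that saturates $E_c$ and satisfies $\lVert u\rVert_{L_{t,x}^6}=\infty$. Furthermore, the compactness built into the profile decomposition furnishes the scaling function $N(t)$ realizing the orbit in \eqref{cptness} as a precompact subset of $\dot H^{3/2}\times\dot H^{1/2}$, which gives almost periodicity modulo symmetries (with $x(t)\equiv 0$ because of radiality).

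The hard part will be establishing the stability/perturbation theory and the linear profile decomposition at the critical regularity $\dot H^{3/2}\times\dot H^{1/2}$ for the cubic nonlinearity $|u|^2u$, whose limited smoothness (only $C^2$ as a function of $u$) forces careful use of fractional chain and product rules and may require auxiliary exotic Strichartz norms to close the estimates in five spatial dimensions. In the present paper this issue is sidestepped by quoting the abstract reduction of Kenig--Merle \cite{KM}, so the task reduces to verifying that the small data well-posedness and stability statements hold in our five-dimensional radial setting, which follows from the Strichartz calculus for the wave equation on $\mathbb{R}^{1+5}$.
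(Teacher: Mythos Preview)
Your proposal is correct and follows exactly the Kenig--Merle concentration-compactness scheme that underlies the cited result \cite{KM}; the paper itself does not give a proof but simply quotes this theorem from \cite{KM}, and you have accurately reconstructed its main steps (critical threshold, profile decomposition, stability lemma, pigeonhole to a single profile, precompactness of the orbit) together with the observation that radiality pins $x(t)\equiv 0$.
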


We will also use the following refinement of Theorem $\ref{thm_minimal}$, which shows that the almost periodic solution $u$ and associated function $N(t)$ can be chosen so that $N(t)$ is piecewise constant on $I^+:=I\cap [0,\infty)$ and $N(t)\geq 1$ for all $t$ in this set.
\begin{theorem}
\label{thm_char_intervals}
Suppose that Theorem $\ref{thm_main}$ failed.  Then there exists a radial solution $u:I\times\mathbb{R}^5\rightarrow\mathbb{R}$ to (NLW) with maximal interval of existence $I$ such that $u$ is almost periodic modulo symmetries, $(u,u_t)\in L_t^\infty(I;\dot{H}_x^{3/2}\times\dot{H}_x^{1/2})$, $\lVert u\rVert_{L_{t,x}^6(I\times\mathbb{R}^5)}=\infty$, and there exists $\delta>0$ and a family of disjoint intervals $\{J_k\}_{k\geq 1}$ with $I^+=\cup J_k$,
\begin{align*}
N(t)=N_k\geq 1\,\,\textrm{for}\,\,t\in J_k,\quad\textrm{and}\quad |J_k|=\delta N_k^{-1}.
\end{align*}
Moreover, either 
\begin{align*}
|I^+|<\infty\quad\textrm{or}\quad |I^+|=\infty.
\end{align*}
\end{theorem}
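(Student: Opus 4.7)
My plan is to refine the minimal counterexample from Theorem \ref{thm_minimal} by an iterative partitioning of $I^+$, using a local constancy property of the scale function $N$ together with the scaling symmetry of (NLW). The essential tool is a \emph{local constancy lemma}: there is an absolute constant $\delta>0$ such that for every $t_0\in I$, $N(t)$ is comparable to $N(t_0)$ on the time window $[t_0-\delta N(t_0)^{-1},\,t_0+\delta N(t_0)^{-1}]\cap I$. The proof combines the compactness of the set $K$ in \eqref{cptness} with the local well-posedness theory for (NLW) in the critical space: by rescaling it suffices to show that data lying in a precompact subset of $\dot H_x^{3/2}\times\dot H_x^{1/2}$ with $N(0)=1$ yield a solution on a uniform time interval of length $\delta$ with controlled $L^6_{t,x}$ Strichartz norm, and this control rules out rapid changes of $N$ via the almost periodicity definition.

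Next, to arrange the lower bound $N(t)\geq 1$ on $I^+$, I would exploit the scaling symmetry $u\mapsto u_\lambda$, which transforms the scale function into $N_\lambda(t)=N(\lambda t)/\lambda$. Setting $\lambda$ slightly below $\inf_{t\in I^+}N(t)$, the rescaled solution satisfies the desired lower bound on its forward interval, provided this infimum is strictly positive. The degenerate case $\inf_{I^+}N=0$ must be excluded separately: if $N(t_n)\to 0$ along a sequence in $I^+$, the compactness of $K$ forces the solution to concentrate at arbitrarily low frequencies, and combining this with the minimality from Theorem \ref{thm_minimal} and the divergence of $\lVert u\rVert_{L^6_{t,x}}$ yields a contradiction via a standard stability/cascade argument.

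With the local constancy lemma and the lower bound in hand, I would construct the partition inductively: set $t_0=0$, $N_1:=N(0)$, and $J_1:=[0,\,\delta N_1^{-1}]\cap I^+$; having constructed $J_1,\ldots,J_k$ with right endpoint $t_k$, set $N_{k+1}:=N(t_k)$ and $J_{k+1}:=[t_k,\,t_k+\delta N_{k+1}^{-1}]\cap I^+$. Since $N(t)$ is comparable to $N_k$ on $J_k$ by local constancy, the redefinition $N(t):=N_k$ for $t\in J_k$ preserves almost periodicity at the cost of enlarging $C(\eta)$ by a fixed multiplicative factor independent of $k$. Continuing until the endpoints exhaust $I^+$ yields either a finite or a countably infinite collection of intervals, which gives the asserted dichotomy $|I^+|<\infty$ or $|I^+|=\infty$.

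The main obstacle is the local constancy lemma: while the analogous statement is standard in the Schr\"odinger setting, for the wave equation in dimension five one must combine the compactness input with a nonlinear Strichartz/perturbation argument respecting the specific structure of the cubic nonlinearity at the supercritical regularity $\dot H_x^{3/2}$. A secondary subtlety is the exclusion of $\inf_{I^+}N=0$, which does not contradict any single hypothesis in isolation and requires playing minimality of the counterexample against the non-scattering assumption.
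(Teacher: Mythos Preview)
Your proposal is correct and follows essentially the same route as the paper: the paper's argument (given as a one-paragraph sketch immediately after the theorem) likewise invokes a rescaling argument from \cite[Theorem 7.1]{KenigMerleNLW} to arrange $N(t)\geq 1$ on $I^+$, then appeals to the local constancy property $N(s)\sim_u N(t)$ for $|s-t|\leq \delta N(t)^{-1}$ (citing \cite[Corollary 3.6]{KillipTaoVisan} for the NLS analogue), and finally modifies $N(t)$ and $C(\eta)$ to obtain the piecewise-constant structure. One minor slip: with the scaling $u_\lambda(t,x)=\lambda u(\lambda t,\lambda x)$ the frequency scale transforms as $N_\lambda(t)=\lambda N(\lambda t)$ rather than $N(\lambda t)/\lambda$, so the correct choice is $\lambda\gtrsim 1/\inf_{I^+}N$; this does not affect the structure of your argument.
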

This theorem is proved by applying a rescaling argument to the function obtained in Theorem $\ref{thm_minimal}$ to find another almost periodic solution with $N(t)\geq 1$ for $t\in I^+$ (see Theorem $7.1$ in \cite{KenigMerleNLW}).  One then observes that the function $N(t)$ obeys $N(s)\sim_u N(t)$ for $|s-t|\leq \delta N(t)^{-1}$ and $\delta$ suitably chosen,  as a consequence of the scaling symmetry and local theory for (NLW).  This property is proved in the NLS setting in \cite[Corollary $3.6$]{KillipTaoVisan}; however, the arguments apply equally to (NLW).  After a suitable modification of $N(t)$ and $C(t)$, the desired result is obtained.

In Theorem $\ref{thm_char_intervals}$ we divide the solutions of (NLW) into two classes depending on the control granted by the frequency localized Morawetz estimate, Lemma $\ref{lem1}$.  This is inspired by recent works in the mass and energy critical NLS settings \cite{Dodson,Visan4}.  In the present context, this corresponds to distinguishing the cases $|I^+|<\infty$ and $|I^+|=\infty$; we also note that this distinction is also present in \cite{KenigMerleNLW}.

We next give a quick remark concerning the decay of norms of the Littlewood-Paley projections of $u$.
\begin{remark}
\label{rem_limit}  Suppose that $u$ is as in Theorem $\ref{thm_char_intervals}$.  The property $\inf_{t\in I^+} N(t)=\inf_k N_k\geq 1$ along with the definition of almost periodicity implies
\begin{align*}
\lim_{N\rightarrow 0} [\lVert u_{\leq N}\rVert_{L_t^\infty(I^+;\dot{H}_x^{3/2})}+\lVert P_{\leq N}u_t\rVert_{L_t^\infty (I^+;\dot{H}_x^{1/2})}]=0.
\end{align*}
\end{remark}

The proof of Theorem $\ref{thm_main}$ is therefore reduced to the task of showing that solutions satisfying the properties given in Theorem $\ref{thm_char_intervals}$ cannot occur.  This is accomplished in Sections $\ref{sec_fintime}$ and $\ref{sec_inftime}$ below, corresponding to the cases $|I^+|<\infty$ and $|I^+|=\infty$, respectively.

To handle the case $|I^+|<\infty$, we show that the solution at time $t$ must be supported in space inside a ball centered at the origin with radius shrinking to $0$ as $t$ approaches the blowup time.  This is then shown to be incompatible with the conservation of energy.

On the other hand, the case $|I^+|=\infty$ requires significantly more analysis.  For this case, we observe that, given $\eta>0$, the frequency localized Morawetz estimate obtained in Section $\ref{sec_mor}$ implies the bound
\begin{align*}
\int_{I_0}\int_{\mathbb{R}^5} \frac{|u_{\geq N}(t)|^4}{|x|}dxdt&\lesssim_u \eta(N^{-1}+|I_0|)
\end{align*}
for $N$ sufficiently small and all $I_0\subset I^+$ compact.  We then obtain a bound from below on the left hand side of this inequality by a multiple of $|I_0|$ (up to a small error term).  Choosing $\eta$ sufficiently small then gives the desired contradiction.

We now conclude this section by giving an outline of the remainder of the paper.  In Section $\ref{sec_prelim}$, we recall some preliminaries and establish our notation.  Section $\ref{sec_str}$ is then devoted to the proof of a frequency localized version of the Strichartz inequality which will be essential to obtain the frequency localized Morawetz estimate.  This estimate is then proved in Section $\ref{sec_mor}$.  Sections $\ref{sec_fintime}$ and $\ref{sec_inftime}$ then preclude the existence of the finite time and infinite time blowup solutions identified in Theorem $\ref{thm_char_intervals}$, completing the proof of Theorem $\ref{thm_main}$.

\subsection{Acknowledgements}

The author would like to thank M. Visan as well as N. Pavlovi\'c and W. Beckner for many enlightening and valuable discussions concerning the content of this paper.  The author also wishes to thank M. Visan for her careful reading and comments on the manuscript.  The author was supported by a Graduate School Continuing Fellowship from the University of Texas at Austin during the preparation of this work.

\section{Preliminaries}
\label{sec_prelim}
In this section, we introduce the notation and some standard estimates that we use throughout the paper.  We write $L_t^qL_x^r$ to indicate the space-time norm
\begin{align*}
\lVert u\rVert_{L_t^qL_x^r}=\left(\int_{\mathbb{R}} \lVert u(t)\rVert_{L_x^q}^rdt\right)^{1/r}
\end{align*}
with the standard convention when $q$ or $r$ is equal to infinity.  If $q=r$, we shorten the notation and write $L_{t,x}^q$.

We write $X\lesssim Y$ to mean that there exists a constant $C>0$ such that $X\leq CY$, while $X\lesssim_u Y$ indicates that the constant $C=C(u)$ may depend on $u$.  We use the symbol $\nabla$ for the derivative operator in only the space variables.  

Throughout the exposition, we define the Fourier transform on $\mathbb{R}^5$ by
\begin{align*}
\widehat{f}(\xi)=(2\pi)^{-5/2}\int_{\mathbb{R}^5} e^{-ix\cdot \xi}f(x)dx.
\end{align*}
We also denote the homogeneous Sobolev spaces by $\dot{H}_x^s(\mathbb{R}^5)$, $s\in\mathbb{R}$, equipped with the norm
\begin{align*}
\lVert f\rVert_{\dot{H}_x^s}=\lVert |\nabla|^sf\rVert_{L_x^2}
\end{align*}
where the fractional differentiation operator is given by
\begin{align*}
\widehat{|\nabla|^sf}(\xi)&=|\xi|^s\widehat{f}(\xi).
\end{align*}

For $s\geq 0$, we say that a pair of exponents $(q,r)$ is $\dot{H}_x^s$-{\it wave admissible} if $q,r\geq 2$, $r<\infty$ and it satisfies
\begin{align*}
\frac{1}{q}+\frac{2}{r}&\leq 1,\\
\frac{1}{q}+\frac{5}{r}&=\frac{5}{2}-s.
\end{align*}
We also define the following {\it Strichartz norms}.  For each $I\subset\mathbb{R}$ and $s\geq 0$, we set
\begin{align*}
\lVert u\rVert_{S_s(I)}&=\sup_{(q,r)\,\dot{H}_x^s-\textrm{wave admissible}} \lVert u\rVert_{L_t^qL_x^r(I\times\mathbb{R}^5)},\\
\lVert u\rVert_{N_s(I)}&=\inf_{(q,r)\,\dot{H}_x^s-\textrm{wave admissible}} \lVert u\rVert_{L_t^{q'}L_x^{r'}(I\times\mathbb{R}^5)}.
\end{align*}

Suppose $u:I\times\mathbb{R}^5\rightarrow\mathbb{R}$ with time interval $0\in I\subset\mathbb{R}$ is a solution to the nonlinear wave equation
\begin{align*}
\left\lbrace \begin{array}{rl}u_{tt}-\Delta u+F&=0\\
(u,u_t)|_{t=0}&=(u_0,u_1)\in \dot{H}_x^{\mu}\times \dot{H}_x^{\mu-1}(\mathbb{R}^d), \quad \mu\in\mathbb{R}.
\end{array}
\right.
\end{align*}
Then for all $s,\tilde{s}\in\mathbb{R}$ we have the inhomogeneous Strichartz estimates \cite{GinibreVelo,KeelTao},
\begin{align}
\nonumber &\lVert |\nabla|^s u\rVert_{S_{\mu-s}(I)}+\lVert |\nabla|^{s-1}u_t\rVert_{S_{\mu-s}(I)}\\
&\hspace{1.5in}\lesssim \lVert (u_0,u_1)\rVert_{\dot{H}_x^{\mu}\times\dot{H}_x^{\mu-1}}+\lVert |\nabla|^{\tilde{s}}F\rVert_{N_{1+\tilde{s}-\mu}(I)}.\label{str}
\end{align}

We also note that our assumption $u\in L_{t,x}^6(K\times\mathbb{R}^5)$ on the solution in Definition $\ref{def_solution}$, combined with the local theory and the Strichartz estimates, implies
\begin{align*}
\lVert |\nabla|^s u\rVert_{S_{\frac{3}{2}-s}(K)}<\infty
\end{align*}
for $s\in [0,\frac{3}{2}]$ and $K\subset I$ compact.

Moreover, for every almost periodic solution $u$ to (NLW) there exists $C(u)>0$ such that for every compact $K\subset I$
\begin{align*}
\frac{1}{C(u)}\int_K N(t)dt\leq \lVert u\rVert_{L_t^6(K;L_x^6)}^6\leq C(u)\bigg(1+\int_K N(t)dt\bigg),
\end{align*}
together with the bound
\begin{align}
\frac{1}{C(u)}\int_{K} N(t)dt\leq \lVert |\nabla|^{3/4}u\rVert_{L_t^2(K;L_x^4)}^2\leq C_1(u)\bigg(1+\int_K N(t)dt\bigg).\label{eqI}
\end{align}
The above bounds are consequences of almost periodicity and the Strichartz estimates ($\ref{str}$).  In the NLS setting, we refer to the analogous estimates in \cite[Lemma $5.21$]{KillipVisanClayNotes} and \cite[Lemma $1.7$]{Visan4}, while for solutions to (NLW) these bounds are obtained in a similar manner, after accounting for the difference in scaling between the equations.

We next recall some basic facts from the Littlewood-Paley theory that will be used frequently in the sequel.   Let $\phi(\xi)$ be a real valued radially symmetric bump function supported in the ball $\{\xi\in\mathbb{R}^d:|\xi|\leq \frac{11}{10}\}$ which equals $1$ on the ball $\{\xi\in\mathbb{R}^d:|\xi|\leq 1\}$.  For any dyadic number $N=2^k$, $k\in\mathbb{Z}$, we define the following Littlewood-Paley operators:
\begin{align*}
\widehat{P_{\leq N}f}(\xi)&=\phi(\xi/N)\hat{f}(\xi),\\
\widehat{P_{>N}f}(\xi)&=(1-\phi(\xi/N)\hat{f}(\xi),\\
\widehat{P_Nf}(\xi)&=(\phi(\xi/N)-\phi(2\xi/N))\hat{f}(\xi).
\end{align*}

\noindent Similarly, we define $P_{<N}$ and $P_{\geq N}$ with 
\begin{align*}
P_{<N}=P_{\leq N}-P_N,\quad P_{\geq N}=P_{>N}+P_N,
\end{align*}
and also
\begin{align*}
P_{M<\cdot\leq N}:=P_{\leq N}-P_{\leq M}=\sum_{M<N_1\leq N} P_{N_1}
\end{align*}
whenever $M\leq N$.

These operators commute with one another and with derivative operators.  Moreover, they are bounded on $L^p$ for $1\leq p\leq \infty$ and obey the following {\it Bernstein inequalities}, 
\begin{align*}
\lVert |\nabla|^s P_{\leq N}f\rVert_{L_x^p}&\lesssim N^s\lVert P_{\leq N}f\rVert_{L_x^p},\\
\lVert P_{>N}f\rVert_{L_x^p}&\lesssim N^{-s}\lVert P_{>N}|\nabla|^sf\rVert_{L_x^p},\\
\lVert |\nabla|^{\pm s}P_Nf\rVert_{L_x^p}&\sim N^{\pm s} \lVert P_Nf\rVert_{L_x^p},
\end{align*}
with $s\geq 0$ and $1\leq p\leq\infty$.

\section{Frequency localized Strichartz estimate}\label{sec_str}

We now obtain a frequency localized version of the Strichartz estimates that we will use as a main ingredient in proving the frequency localized Morawetz estimate in Section $\ref{sec_mor}$.  The proof of this result is inspired by analogous results for the mass and energy critical nonlinear Schr\"odinger equation due to Dodson \cite{Dodson} and Visan \cite{Visan4}.

\begin{theorem}[Frequency localized Strichartz estimate.]
\label{floc_str}
Suppose that $u$ is an almost periodic solution to (NLW) with maximal interval of existence $I$, $(u,u_t)\in L_t^\infty(I;\dot{H}_x^{3/2}\times\dot{H}_x^{1/2})$, and such that there exist disjoint intervals $\{J_k\}_{k\geq 1}$ with $I^+=\cup J_k$ and for every $k$, $N(t)=N_k\in [1,\infty)$ on $J_k$, $|J_k|=\delta N_k^{-1}$.

Then there exists $C=C(u)>0$ such that for all dyadic $N$ and compact intervals $I_0=\cup J_k\subset I^+$ we have
\begin{align}
\lVert |\nabla|^{3/4} u_{\leq N}\rVert_{L_t^2(I_0;L_x^4)}\leq C(u)(1+(N|I_0|)^{1/2})\label{goal1}
\end{align}
Moreover, for every $\eta>0$ there exists $N_0>0$ such that for $N<N_0$ we have
\begin{align}
\lVert |\nabla|^{3/4}u_{\leq N}\rVert_{L_t^2(I_0;L_x^4)}\leq C(u)\eta (1+(N|I_0|)^{1/2}).\label{goal2}
\end{align}
\end{theorem}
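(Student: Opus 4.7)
The plan is to follow the Dodson--Visan strategy developed for the mass- and energy-critical NLS, adapted here to the NLW setting with the natural $\dot{H}^{3/4}$ Strichartz pair $(2,4)$. The starting point is the Duhamel formula for $u_{\leq N}$ combined with the Strichartz estimate (\ref{str}) at regularity $\mu=3/2$. Picking a base point $t_0\in I_0$, this yields
\begin{align*}
\lVert |\nabla|^{3/4} u_{\leq N}\rVert_{L_t^2(I_0;L_x^4)}
&\lesssim
\lVert P_{\leq N}(u(t_0),u_t(t_0))\rVert_{\dot{H}_x^{3/2}\times \dot{H}_x^{1/2}}
+ \lVert |\nabla|^{\tilde{s}} P_{\leq N}(u^3)\rVert_{N_{\tilde{s}-1/2}(I_0)},
\end{align*}
for a suitable $\tilde{s}\geq 1/2$. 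The linear piece is $\leq C(u)$ by the a priori bound, and via Remark \ref{rem_limit} it can be made $\leq C(u)\eta$ whenever $N$ is small enough, which is precisely the source of the additional $\eta$ factor in (\ref{goal2}).

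For the nonlinear piece, the plan is to split $u=u_{\leq \alpha N}+u_{>\alpha N}$ with $\alpha$ a small parameter to be fixed, and expand
\begin{align*}
u^3
= u_{\leq \alpha N}^3
+ 3\,u_{\leq \alpha N}^2\, u_{>\alpha N}
+ 3\,u_{\leq \alpha N}\, u_{>\alpha N}^2
+ u_{>\alpha N}^3.
\end{align*}
Each term is to be estimated in the dual Strichartz norm using H\"older's inequality, Bernstein's inequality, the Strichartz bound (\ref{str}) for the high-frequency factor $u_{>\alpha N}$, and the key control (\ref{eqI}) on $\lVert |\nabla|^{3/4} u\rVert_{L_t^2 L_x^4}^2 \lesssim 1+\int N(t)\,dt$. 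The all-low piece $u_{\leq \alpha N}^3$ is treated by induction on $N$: choosing $\alpha$ small allows a contribution of this type to be reabsorbed into the left-hand side. The characteristic interval structure $|J_k|=\delta N_k^{-1}$ with $N_k\geq 1$, together with the restriction of the output to frequencies $\leq N$, is what converts the generic bound $\int_{I_0}N(t)\,dt$ into the improved scaling $N|I_0|$; the point is that the low-frequency Bernstein gain compensates the summation over the characteristic intervals $J_k\subset I_0$.

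The argument is closed by a bootstrap on $N$, combining the linear bound, the estimates on the nonlinear pieces with at least one high-frequency factor, and the reabsorption of the all-low contribution. For the improvement (\ref{goal2}) the same scheme is run using the stronger bound on the linear part (and on the terms dominated by low-frequency factors of $u$) granted by Remark \ref{rem_limit}, so that the overall estimate acquires an additional factor of $\eta$ whenever $N<N_0(\eta,u)$.

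I expect the main technical obstacle to be the control of the high-high-high to low cascade $P_{\leq N}(u_{>\alpha N}^3)$, in which the output frequency is much lower than that of each of the three factors. This term must be handled with a careful Littlewood--Paley expansion, trading $|\nabla|$ for powers of $N$ via Bernstein and using the $L_t^\infty\dot{H}^{3/2}$ a priori bound to recover the missing derivatives. A secondary bookkeeping difficulty is to ensure that the final bound scales as $N|I_0|$ rather than $\int_{I_0}N(t)\,dt$, which is what forces the particular frequency split at $\alpha N$ and makes the Bernstein gain on the output projection essential.
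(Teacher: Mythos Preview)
Your overall strategy---Duhamel plus Strichartz, downward induction on $N$ from the base case supplied by (\ref{eqI}), and a frequency decomposition of the nonlinearity---matches the paper's, but the core step has a genuine gap: the frequency split is in the wrong direction, and the ``reabsorption'' you describe cannot close. The induction runs \emph{downward} (the base case from (\ref{eqI}) is for large $N$), so the split must be made \emph{above} $N_1$, at $N_1/\eta_0$ with $\eta_0$ small, so that every high factor $u_{>N_1/\eta_0}$ lies in the range where the inductive hypothesis applies; the Bernstein gain from the output projection $P_{\leq N_1}$, summed over dyadic input frequencies $M>N_1/\eta_0$, then puts a positive power of $\eta_0$ in front of the inductive constant $C(u)$, and this is what closes the induction on the terms carrying at least one high factor. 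With your split at $\alpha N$ below $N$, the piece $u_{>\alpha N}$ contains frequencies in $(\alpha N,N_0]$ on which there is neither an inductive bound nor a Bernstein gain from the output; and for the all-low cube $u_{\leq\alpha N}^3$ a direct estimate yields only $\lesssim_u \lVert |\nabla|^{3/4}u_{\leq\alpha N}\rVert_{L_t^2L_x^4}\leq \lVert |\nabla|^{3/4}u_{\leq N}\rVert_{L_t^2L_x^4}$ with \emph{no} small prefactor (almost periodicity makes $\lVert u_{\leq\alpha N}\rVert_{L_t^\infty L_x^5}$ small only when $\alpha N$ is small absolutely, not uniformly over the induction), so it cannot be reabsorbed.

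The second missing ingredient is that in the paper the remaining cube $u_{\leq N_1/\eta_0}^3$ is \emph{not} reabsorbed but further decomposed via a time-dependent split $P_{\leq N_1/\eta_0}u = P_{\leq N_1/\eta_0}u_{\leq c_0N(t)}+P_{\leq N_1/\eta_0}u_{>c_0N(t)}$, with $c_0$ fixed by almost periodicity. Terms with two factors of $u_{\leq c_0N(t)}$ pick up a small prefactor from almost periodicity and then use the inductive hypothesis. Terms carrying a factor of $u_{>c_0N(t)}$ are instead summed over the characteristic intervals: on each $J_k$ a \emph{high}-frequency Bernstein gain on $u_{>c_0N_k}$ yields $N_k^{-1/2}$, together with unit-size Strichartz bounds on $J_k$, and $\sum_k N_k^{-1}=\delta^{-1}|I_0|$ then produces $(N_1|I_0|)^{1/2}$ directly, without the inductive constant; one simply chooses $C(u)$ large enough to dominate this contribution. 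Your sketch attributes the $N|I_0|$ scaling to a low-frequency Bernstein gain on the output alone, but without the $c_0N(t)$ split the conversion of $\int_{I_0}N(t)\,dt$ into $N|I_0|$ does not go through.
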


Before we proceed with the proof of the theorem, we record the following related estimates, derived by interpolating $(\ref{goal1})$ and $(\ref{goal2})$ with the a priori bound on $L_t^\infty (\dot{H}_x^{3/2}\times\dot{H}_x^{1/2})$.

\begin{corollary}
\label{cor_floc_str}
Let $u$ be as in Theorem $\ref{floc_str}$.  Then there exists $C(u)>0$ such that
\begin{itemize}
\item for each dyadic $N>0$ and compact interval $I_0=\cup J_k\subset I^+$ we have
\begin{align*}
\lVert u_{>N}\rVert_{L_t^3(I_0;L_x^{30/7})}&\leq C(u)N^{-1/2}(1+N|I_0|)^{1/3},\\
\lVert u_{>N}\rVert_{L_t^4(I_0;L_x^{20/7})}&\leq C(u)N^{-1}(1+N|I_0|)^{1/4},
\end{align*}
and
\item for each $\eta>0$ there exists $N_0>0$ such that for $N<N_0$ we have
\begin{align*}
\lVert \nabla u_{\leq N}\rVert_{L_t^3(I_0;L_x^3)}&\leq C(u)\eta(1+N|I_0|)^{1/3},\\
\lVert \nabla u_{\leq N}\rVert_{L_t^4(I_0;L_x^{20/7})}&\leq C(u)\eta (1+N|I_0|)^{1/4}.
\end{align*}
\end{itemize}
\end{corollary}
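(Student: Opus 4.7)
The plan is to derive each of the four estimates by first producing single-frequency analogues at the Littlewood-Paley level, and then either summing dyadically over $M>N$ (for the bounds on $u_{>N}$) or interpolating directly on $u_{\leq N}$ (for the bounds involving $\nabla u_{\leq N}$).  The key inputs beyond the estimates of Theorem $\ref{floc_str}$ are Bernstein's inequality, the $L^p$-boundedness of $P_M$, the a priori bound $\lVert (u,u_t)\rVert_{L_t^\infty(\dot{H}_x^{3/2}\times\dot{H}_x^{1/2})}\lesssim_u 1$, and Sobolev embedding.

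First I would establish, for every dyadic $M$, the single-frequency estimate
\[
\lVert P_M u\rVert_{L_t^2 L_x^4(I_0\times\mathbb{R}^5)}\lesssim_u M^{-3/4}\bigl(1+(M|I_0|)^{1/2}\bigr),
\]
by applying $(\ref{goal1})$ at level $2M$, writing $P_M u = P_M u_{\leq 2M}$, using the $L_x^4$-boundedness of $P_M$, and applying Bernstein to exchange $|\nabla|^{3/4}$ for $M^{3/4}$.  From the a priori bound, together with Bernstein and Sobolev embedding on $\mathbb{R}^5$, I would extract $\lVert P_M u\rVert_{L_t^\infty L_x^p}\lesssim_u M^{1-5/p}$ for every $p\geq 2$; of particular relevance are $p=5$ (giving $1$) and $p=20/9$ (giving $M^{-5/4}$).

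For the bound on $\lVert u_{>N}\rVert_{L_t^3 L_x^{30/7}}$ I would then interpolate the first display with $\lVert P_M u\rVert_{L_t^\infty L_x^5}\lesssim_u 1$ at weights $(2/3,1/3)$, which yields the single-frequency bound $\lVert P_M u\rVert_{L_t^3 L_x^{30/7}}\lesssim_u M^{-1/2}(1+M|I_0|)^{1/3}$; summing dyadically over $M>N$ (splitting at $M\sim|I_0|^{-1}$) collapses this sum to the desired $N^{-1/2}(1+N|I_0|)^{1/3}$.  The argument for $\lVert u_{>N}\rVert_{L_t^4 L_x^{20/7}}$ is parallel: interpolating the first display against $\lVert P_M u\rVert_{L_t^\infty L_x^{20/9}}\lesssim_u M^{-5/4}$ at equal weights gives the single-frequency bound $M^{-1}(1+M|I_0|)^{1/4}$, and dyadic summation then produces the second high-frequency estimate.

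For the two low-frequency estimates $(\ref{goal2})$ takes the role of $(\ref{goal1})$ and no dyadic sum is needed.  Interpolating $(\ref{goal2})$ with the a priori bound $\lVert |\nabla|^{3/2} u\rVert_{L_t^\infty L_x^2}\lesssim_u 1$ at weights $(2/3,1/3)$ places the derivative at level $1$ and the space-time exponents at $(3,3)$, giving the $L_t^3 L_x^3$ bound on $\nabla u_{\leq N}$.  The $L_t^4 L_x^{20/7}$ bound then follows by a further H\"older interpolation of the $L_t^3 L_x^3$ estimate against the Sobolev-endpoint bound $\lVert \nabla u_{\leq N}\rVert_{L_t^\infty L_x^{5/2}}\lesssim_u 1$ at weights $(3/4,1/4)$.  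In both cases the small factor emerges as a positive fractional power of $\eta$, which can be absorbed into $\eta$ itself since $\eta$ is arbitrary.

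The most delicate step is the dyadic summation for the high-frequency estimates: one must verify that, splitting at $M\sim|I_0|^{-1}$, the sum $\sum_{M>N} M^{-a}(1+M|I_0|)^b$ with $a\in\{1/2,1\}$ and $b\in\{1/3,1/4\}$ collapses to $N^{-a}(1+N|I_0|)^b$.  Here the condition $a>b$ guarantees convergence of the dyadic tail.  Everything else reduces to routine Littlewood-Paley calculus, Bernstein, and H\"older interpolation.
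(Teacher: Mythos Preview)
Your proposal is correct and follows essentially the same approach as the paper: both combine the $L_t^2 L_x^4$ estimate from Theorem~\ref{floc_str} (via Bernstein at each dyadic level) with the $L_t^\infty$ a priori bound through H\"older interpolation, and close the high-frequency estimates by a dyadic summation over $M>N$.  The only cosmetic difference is the order of operations---the paper interpolates directly on $u_{>N}$ and carries out the dyadic sum in $L_t^2 L_x^4$ (and in $|\nabla|^{-5/4}L_t^2L_x^4$), whereas you interpolate frequency-by-frequency first and then sum in the target space; both orderings give the same bounds, and the low-frequency argument is identical.
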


\begin{proof}[Proof of Theorem $\ref{floc_str}$.]  We begin by showing ($\ref{goal1}$).  Let $I_0\subset I^+$ be given as stated and observe that the bound ($\ref{eqI}$) implies that ($\ref{goal1}$) holds with $C_1(u)$ for all
\begin{align*}
N\geq \frac{\int_{I_0}N(t)dt}{|I_0|}.
\end{align*}

For general dyadic numbers $N$, we proceed by induction.  Fix 
\begin{align*}
C(u)>\max \{C_1(u),1\}
\end{align*}
to be determined, and suppose that ($\ref{goal1}$) holds for all $N$ larger than some $N_0$.  Our goal is to show that ($\ref{goal1}$) holds for $N=N_1:=N_0/2$ (with $C(u)$ unchanged).  Toward this end, we apply the Strichartz inequality to obtain
\begin{align}
\nonumber &\lVert |\nabla|^{3/4} u_{\leq N_1}\rVert_{L_t^2(I_0;L_x^4)}\\
&\hspace{0.2in}\lesssim \inf_{t\in I_0} \lVert (u_{\leq N_1}(t),\partial_t u_{\leq N_1}(t))\rVert_{\dot{H}_x^{3/2}\times\dot{H}_x^{1/2}}+\lVert |\nabla|^{5/4} P_{\leq N_1}[u(t)^3]\rVert_{L_t^2(I_0; L_x^\frac{4}{3})}.\label{eqA01}
\end{align}
In the remainder of the proof, all space-time norms will be over the set $I_0\times\mathbb{R}^5$, unless otherwise indicated.

To estimate the nonlinear term in ($\ref{eqA01}$), we fix $0<\eta_0\leq \frac{1}{2}$ (to be determined later in the argument) and use the almost periodicity of $u$ to choose $c_0=c_0(\eta_0)$ such that 
\begin{align}
\lVert |\nabla|^{3/2} u_{\leq c_0N(t)}(t)\rVert_{L_t^\infty L_x^2}+\lVert |\nabla|^{1/2}\partial_t u_{\leq c_0N(t)}(t)\rVert_{L_t^\infty L_x^2}\leq \eta_0.\label{eqA0}
\end{align}
Then, writing
\begin{align*}
u(t)&=u_{\leq N_1/\eta_0}(t)+u_{>N_1/\eta_0}(t)
\end{align*}
and using the identity
\begin{align*}
&(u_{>N_1/\eta_0}(t)+u_{\leq N_1/\eta_0}(t))^3\\
&\hspace{0.2in}=u_{>N_1/\eta_0}(t)^3+3u_{>N_1/\eta_0}(t)u_{\leq N_1/\eta_0}(t)u(t)+u_{\leq N_1/\eta_0}(t)^3,
\end{align*}
we obtain
\begin{align}
\nonumber \lVert |\nabla|^{3/4}u_{\leq N_1}\rVert_{L_t^2L_x^4}&\lesssim \inf_{t\in I_0} \lVert (u_{\leq N_1}(t),\partial_t u_{\leq N_1}(t))\rVert_{\dot{H}_x^{3/2}\times\dot{H}_x^{1/2}}\\
\label{eqA}&\hspace{0.2in}+\lVert |\nabla|^{5/4}P_{\leq N_1}[u_{>N_1/\eta_0}(t)^3]\rVert_{L_t^2 L_x^\frac{4}{3}}\\
\label{eqB}&\hspace{0.2in}+\lVert |\nabla|^{5/4}P_{\leq N_1}[u_{>N_1/\eta_0}(t)u_{\leq N_1/\eta_0}(t)u(t)]\rVert_{L_t^2 L_x^\frac{4}{3}}\\
\nonumber &\hspace{0.2in}+\lVert |\nabla|^{5/4}P_{\leq N_1}[u_{\leq N_1/\eta_0}(t)^3]\rVert_{L_t^2 L_x^\frac{4}{3}}.
\end{align}
Furthermore, we bound the last term by a multiple of 
\begin{align}
\label{eqC}&\lVert |\nabla|^{5/4}P_{\leq N_1}\bigg(u_{\leq N_1/\eta_0}(t)\big[P_{\leq}u_{<c_0N(t)}(t)\big]^2\bigg)\rVert_{L_t^2 L_x^\frac{4}{3}}\\
\label{eqD}&\hspace{0.2in}+\lVert |\nabla|^{5/4}P_{\leq N_1}\bigg(u_{\leq N_1/\eta_0}(t)\big[P_{\leq}u_{\leq c_0N(t)}(t)\big]\big[P_{\leq}u_{\geq c_0N(t)}(t)\big]\bigg)\rVert_{L_t^2 L_x^\frac{4}{3}}\\
\label{eqE}&\hspace{0.2in}+\lVert |\nabla|^{5/4}P_{\leq N_1}\bigg(u_{\leq N_1/\eta_0}(t)\big[P_{\leq}u_{\geq c_0N(t)}(t)\big]^2\bigg)\rVert_{L_t^2 L_x^\frac{4}{3}},
\end{align}
where we have set $P_{\leq}=P_{\leq N_1/\eta_0}$ and used the decomposition
\begin{align*}
P_{\leq}u(t)&=P_{\leq }u_{\leq c_0N(t)}(t)+P_{\leq }u_{> c_0N(t)}(t)
\end{align*}
and where $c_0$ is chosen in $(\ref{eqA0})$.

Thus, it suffices to bound $(\ref{eqA})$ through $(\ref{eqE})$.  Before estimating each of these terms, we will need the following estimate, which is obtained via H\"older's inequality in time and interpolation: for each dyadic $M>0$,
\begin{align}
\nonumber \lVert |\nabla|^{5/4}u_{\leq M}\rVert_{L_t^2L_x^{20/7}}&\leq (M|I_0|)^{1/4}\lVert \nabla u_{\leq M}\rVert_{L_t^4L_x^{20/7}}\\
\nonumber &\lesssim (M|I_0|)^{1/2}+\lVert \nabla u_{\leq M}\rVert_{L_t^4L_x^{20/7}}^2\\
\nonumber &\lesssim (M|I_0|)^{1/2}+\lVert |\nabla|^{3/4}u_{\leq M}\rVert_{L_t^{2}L_x^{4}}\lVert |\nabla|^{5/4}u_{\leq M}\rVert_{L_t^\infty L_x^{20/9}}\\
\label{eqF}&\lesssim_u (M|I_0|)^{1/2}+\lVert |\nabla|^{3/4}u_{\leq M}\rVert_{L_t^{2}L_x^{4}}.
\end{align}

With this bound in hand, we are now ready to estimate the above terms.  For $(\ref{eqA})$, we note that an application of Bernstein's inequality gives
\begin{align}
\nonumber (\ref{eqA})&\lesssim N_1^{5/4}\lVert u_{>N_1/\eta_0}(t)^3\rVert_{L_t^2L_x^{4/3}}\\
\nonumber &\lesssim N_1^{5/4} \sum_{M>N_1/\eta_0}\lVert u_{M}(t)\rVert_{L_t^2L_x^{20/7}}\lVert u_{>N_1/\eta_0}(t)\rVert_{L_t^\infty L_x^5}^2\\
\nonumber &\lesssim_u N_1^{5/4} \sum_{M>N_1/\eta_0} M^{-5/4}\lVert |\nabla|^{5/4}u_M\rVert_{L_t^2L_x^{20/7}}\\
\nonumber &\leq\eta_0^{3/4}C_2(u)C(u)(N_1|I_0|)^\frac{1}{2}+\eta_0^{5/4}C_2(u)C(u),
\end{align}
where to obtain the last line we have used $(\ref{eqF})$ and the induction hypothesis.  We may use the same argument to estimate $(\ref{eqB})$, obtaining
\begin{align*}
(\ref{eqB})&\leq \eta_0^{3/4}C_2(u)C(u)(N_1|I_0|)^{1/2}+\eta_0^{5/4}C_2(u)C(u).
\end{align*}
On the other hand, to estimate $(\ref{eqC})$, we apply the fractional product rule \cite{ChristWeinstein,KenigPonceVega} and the Sobolev embedding to obtain
\begin{align}
\nonumber (\ref{eqC})&\leq \lVert |\nabla|^{5/4} u_{\leq N_1/\eta_0}\rVert_{L_t^2L_x^{20/7}}\lVert [P_{\leq}u_{\leq c_0N(t)}(t)]^2\rVert_{L_t^\infty L_x^{5/2}}\\
\nonumber &\hspace{0.2in}+\lVert u_{\leq N_1/\eta_0}\rVert_{L_t^2L_x^{10}}\lVert |\nabla|^{5/4}\left([P_{\leq }u_{\leq c_0N(t)}(t)]^2\right)\rVert_{L_t^{\infty}L_x^{20/13}}\\
\nonumber &\lesssim_u \eta_0^2\left(\lVert |\nabla|^{5/4}u_{\leq N_1/\eta_0}\rVert_{L_t^2L_x^{20/7}}+\lVert |\nabla|^{3/4}u_{\leq N_1/\eta_0}\rVert_{L_t^2L_x^4}\right),
\end{align}
where $c_0$ is chosen in $(\ref{eqA0})$.  Then, using ($\ref{eqF}$) and the induction hypothesis once again, we get
\begin{align}
\nonumber (\ref{eqC})&\leq \eta_0^2C_3(u)(\eta_0^{-1/2}C(u)(N_1|I_0|)^{1/2}+C(u)).
\end{align}

We now turn our attention to the two remaining terms.  In what follows, we will use the notation $v(t)$ to refer to either of the function $P_{\leq N_1/\eta_0}u_{\leq c_0N(t)}(t)$ and $P_{\leq N_1/\eta_0}u_{>c_0N(t)}(t)$.  In particular, using Bernstein's inequalities combined with the fractional product rule, we obtain the preliminary bound
\begin{align}
\nonumber &\lVert |\nabla|^{5/4}P_{\leq N_1}\big[ u_{\leq N_1/\eta_0}(t)v(t)P_{\leq }u_{> c_0N(t)}(t)\big]\rVert_{L_t^2(J_k;L_x^\frac{4}{3})}\\
\nonumber &\hspace{0.1in}\lesssim N_1^{1/2}\lVert |\nabla|^{3/4}\big[u_{\leq N_1/\eta_0}(t)v(t)\big]\rVert_{L_t^\infty L_x^{20/11}}\lVert P_{\leq}u_{> c_0N(t)}(t)\rVert_{L_t^2(J_k;L_x^5)}\\
\label{eqG}&\hspace{0.2in}+N_1^{1/2}\lVert u_{\leq N_1/\eta_0}\rVert_{L_t^6(J_k;L_x^6)}\lVert u\rVert_{L_t^6(J_k;L_x^6)}\lVert |\nabla|^{3/4}P_{\leq} u_{>c_0N(t)}\rVert_{L_t^6(J_k;L_x^{12/5})}.
\end{align}
We then use the fractional product rule again to bound the factor
\begin{align*}
\lVert |\nabla|^{3/4}[u_{\leq N_1/\eta_0}v]\rVert_{L_t^\infty L_x^{20/11}}&\lesssim \lVert |\nabla|^{3/4} u_{\leq N_1/\eta_0}\rVert_{L_t^\infty L_x^{20/7}}\lVert v\rVert_{L_t^\infty L_x^5}\\
&\hspace{0.2in}+\lVert u_{\leq N_1/\eta_0}\rVert_{L_t^\infty L_x^5}\lVert |\nabla|^{3/4}v\rVert_{L_t^\infty L_x^{20/7}}\\
&\lesssim_u 1.
\end{align*}
Invoking this bound in ($\ref{eqG}$), we obtain
\begin{align}
\nonumber &\max\{(\ref{eqD}),(\ref{eqE})\}\\
\nonumber &\leq \left(\sum_{J_k\subset I_0} \lVert |\nabla|^{5/4}P_{\leq N_1}\big[ u_{\leq N_1/\eta_0}(t)v(t)P_{\leq}u_{>c_0N(t)}(t)\big]\rVert_{L_t^2(J_k;L_x^{4/3})}^2\right)^{1/2}\\
\label{eqH}&\lesssim_u N^{1/2}\left(\sum_{J_K\subset I_0} \left\{\lVert u_{>c_0N_k}\rVert_{L_t^2(J_k;L_x^5)}^2+\lVert |\nabla|^{3/4}u_{>c_0N_k}\rVert_{L_t^6(J_k;L_x^{12/5})}^2\right\}\right)^{1/2}
\end{align}
where in the second term of $(\ref{eqG})$ we use the bound $(\ref{eqI})$ in the form
\begin{align}
\lVert u\rVert_{L_t^6(J_k;L_x^6)}\leq C(u)(1+\delta)\lesssim_u 1.\label{eqIk}
\end{align}

Moreover, using Bernstein's inequalities and the bounds $\lVert |\nabla|^{1/2}u\rVert_{L_t^2(J_k;L_x^5)}\lesssim_u 1$ and $\lVert |\nabla|^{5/4}u\rVert_{L_t^6(J_k;L_x^{12/5})}\lesssim_u 1$ (these bounds are obtained from the same argument used to prove ($\ref{eqI}$)),
\begin{align*}
(\ref{eqH})&\lesssim_u N^{1/2}\left(\sum_{J_k\subset I_0} \frac{1}{c_0N_k}\left\{\lVert |\nabla|^{1/2} u_{>c_0N_k}\rVert_{L_t^2(J_k;L_x^5)}^2\right.\right.\\
&\hspace{1in}\left.\left.+\lVert |\nabla|^{5/4}u_{>c_0N_k}\rVert_{L_t^6(J_k;L_x^{12/5})}^2\right\}\right)^{1/2}\\
&\lesssim_u N^{1/2}\left(\sum_{J_k\subset I_0} \frac{1}{c_0N_k}\right)^{1/2}\\
&\leq \frac{C_4(u)}{c_0^{1/2}}(N_1|I_0|)^{1/2}.
\end{align*}

Combining the estimates of ($\ref{eqA}$) through ($\ref{eqE}$), we obtain
\begin{align}
\nonumber &\lVert |\nabla|^{3/4} u_{\leq N_1}\rVert_{L_t^2L_x^4}\\
\nonumber &\hspace{0.2in}\leq C_0\inf_{t\in I_0} \lVert (u_{\leq N_1}(t),\partial_t u_{\leq N_1}(t))\rVert_{\dot{H}_x^{3/2}\times\dot{H}_x^{1/2}}\\
\nonumber &\hspace{0.4in}+2\eta_0^{3/4}C_2(u)C(u)((N_1|I_0|)^{1/2}+\eta_0^{1/2})\\
&\hspace{0.4in}+\eta_0^2C_3(u)C(u)(\eta_0^{-1/2}(N_1|I_0|)^{1/2}+1)+\frac{C_4(u)}{c_0^{1/2}}(N_1|I_0|)^{1/2}.\label{eqabcad1}
\end{align}
We now choose $\eta_0$ sufficiently small (depending on $C_2(u)$ and $C_3(u)$) to ensure that
\begin{align*}
\lVert |\nabla|^{3/4} u_{\leq N_1}\rVert_{L_t^2L_x^4}&\leq C_0\inf_{t\in I_0} \lVert (u_{\leq N_1}(t),\partial_t u_{\leq N_1}(t))\rVert_{\dot{H}_x^{3/2}\times\dot{H}_x^{1/2}}+\frac{2C(u)}{3}(N_1|I_0|)^{1/2}\\
&\hspace{0.4in}+\frac{2C(u)}{3}+\frac{C_4(u)}{c_0^{1/2}}(N_1|I_0|)^{1/2}. 
\end{align*}
We now choose $C(u)$ large enough so that
\begin{align*}
C(u)>\max\{\frac{3C_{4}(u)}{c_0(\eta_0)^{1/2}},3C_0\lVert (u,u_t)\rVert_{L_t^\infty(I;\dot{H}_x^{3/2}\times\dot{H}_x^{1/2})}\}
\end{align*}
With such a choice of $C(u)$ we obtain
\begin{align}
\lVert |\nabla|^{3/4} u_{\leq N_1}\rVert_{L_t^2L_x^4}&\leq C(u)(1+(N_1|I_0|)^{1/2}),
\end{align}
completing the induction.

We now turn to ($\ref{goal2}$).  Let $\eta>0$ be given and fix $N_0=N_0(\eta)>0$ to be determined later in the argument.  Let $N\leq N_0$ be given and recall that ($\ref{goal1}$) is satisfied for all $N>0$.  As a consequence, ($\ref{eqabcad1}$) is satisfied for any $\eta_0\in (0,\frac{1}{2}]$ with $N_1$ replaced by $N$.  More precisely, after setting
\begin{align*}
f(N)=\lVert (u_{\leq N},\partial_t u_{\leq N})\rVert_{L_t^\infty(\dot{H}_x^{3/2}\times\dot{H}_x^{1/2})}+\sup_{J_k\subset I} \lVert u_{\leq N}\rVert_{L_t^6(J_k;L_x^6)},
\end{align*}
we have
\begin{align}
&\nonumber \lVert |\nabla|^{3/4}u_{\leq N}\rVert_{L_t^2L_x^4}\\
\nonumber &\hspace{0.2in}\lesssim_u f(N)+\eta_0^{3/4}((N|I_0|)^{1/2}+\eta_0^{1/2})\\
\label{qwer}&\hspace{0.4in}+\eta_0^2(\eta_0^{-1/2}(N|I_0|)^{1/2}+1)+\frac{f(N)}{c_0^{1/2}}(N|I_0|)^{1/2}
\end{align}
for any $\eta_0\in (0,\frac{1}{2}]$.  We next show that $f(N)\rightarrow 0$ as $N\rightarrow 0$.  Indeed, invoking the Strichartz inequality ($\ref{str}$) and using the decomposition $u=u_{\leq N^{1/2}}+u_{>N^{1/2}}$ followed by the Bernstein inequalities,
\begin{align*}
f(N)&\lesssim \lVert (u_{\leq N},\partial_t u_{\leq N})\rVert_{L_t^\infty(\dot{H}_x^{3/2}\times\dot{H}_x^{1/2})}+\sup_{J_k\subset I} \lVert |\nabla|^{5/4}P_{\leq N}[u(t)^3]\rVert_{L_t^2(J_k;L_x^{4/3})}\\
&\lesssim \lVert (u_{\leq N},\partial_t u_{\leq N})\rVert_{L_t^\infty(\dot{H}_x^{3/2}\times\dot{H}_x^{1/2})}\\
&\hspace{0.2in}+\sup_{J_k\subset I} \bigg(\lVert |\nabla|^{5/4} P_{\leq N}[u_{>N^{1/2}}(t)^3]\rVert_{L_t^2(J_k;L_x^{4/3})}\\
&\hspace{0.4in}+\lVert |\nabla|^{5/4}P_{\leq N}[u_{>N^{1/2}}u_{\leq N^{1/2}}u]\rVert_{L_t^2(J_k;L_x^{4/3})}\\
&\hspace{0.4in}+\lVert |\nabla|^{5/4}P_{\leq N}[u_{\leq N^{1/2}}^3]\rVert_{L_t^2(J_k;L_x^{4/3})}\bigg)\\
&\lesssim \lVert (u_{\leq N},\partial_t u_{\leq N})\rVert_{L_t^\infty(\dot{H}_x^{3/2}\times\dot{H}_x^{1/2})}\\
&\hspace{0.2in}+\sup_{J_k\subset I} \bigg(N^{5/4}\lVert u\rVert_{L_t^6(J_k;L_x^6)}^2\lVert u_{>N^{1/2}}\rVert_{L_t^6(J_k;L_x^{12/5})}\\
&\hspace{0.4in}+\lVert u\rVert_{L_t^4(J_k;L_x^{20/3})}^2\lVert |\nabla|^{5/4}u_{<N^{1/2}}\rVert_{L_t^\infty L_x^{20/9}}\bigg)
\end{align*}
for any $N>0$.  We then bound the second term by using the Bernstein inequalities followed by $(\ref{eqIk})$ and the analogous bounds $\lVert |\nabla|^{5/4}u\rVert_{L_t^6(J_k;L_x^{12/5})}\lesssim_u 1$ and $\lVert u\rVert_{L_t^4(J_k;L_x^{20/3})}\lesssim_u 1$ to obtain
\begin{align*}
f(N)&\lesssim_u \lVert (u_{\leq N},\partial_t u_{\leq N})\rVert_{L_t^\infty(\dot{H}_x^{3/2}\times\dot{H}_x^{1/2})}+N^{5/8},
\end{align*}
which tends to $0$ as $N\rightarrow 0$ as a consequence of Remark \ref{rem_limit}.  With this limit in hand, we choose $\eta_0$ small enough to ensure $\eta_0^{3/4}<\eta$ and $N_0$ small enough to guarantee that $N<N_0$ implies $f(N)<\min \{\eta,\eta c_0(\eta_0)^{1/2}\}$.  The inequality ($\ref{qwer}$) then gives
\begin{align*}
\lVert |\nabla|^{3/4}u_{\leq N}\rVert_{L_t^2L_x^4}&\lesssim_u \eta(1+(N|I_0|)^{1/2})
\end{align*}
as desired.
\end{proof}

\begin{proof}[Proof of Corollary \ref{cor_floc_str}.]
We note that interpolation gives
\begin{align}
\lVert u_{> N}\rVert_{L_t^3L_x^{30/7}}&\lesssim \lVert u_{>N}\rVert_{L_t^\infty L_x^5}^{1/3}\lVert u_{> N}\rVert_{L_t^2L_x^4}^{2/3}\label{eqabcad2a}
\intertext{and}
\lVert u_{> N}\rVert_{L_t^4L_x^{20/7}}&\lesssim \lVert |\nabla|^{5/4}u_{> N}\rVert_{L_t^\infty L_x^{20/9}}^{1/2}\lVert |\nabla|^{-5/4}u_{> N}\rVert_{L_t^2L_x^4}^{1/2}.\label{eqabcad2}
\end{align}
The Sobolev inequality followed by the boundedness of the Littlewood-Paley projection then yields
\begin{align*}
\lVert |\nabla|^{5/4}u_{> N}\rVert_{L_t^\infty L_x^{20/9}}&\lesssim \lVert (u,u_t)\rVert_{L_t^\infty (\dot{H}_x^{3/2}\times\dot{H}_x^{1/2})}.
\end{align*}
On the other hand, the Bernstein inequalities along with Lemma $\ref{floc_str}$ give the bounds
\begin{align*}
\lVert u_{>N}\rVert_{L_t^2L_x^4}&\leq \sum_{M>N} M^{-3/4}\lVert |\nabla|^{3/4}u_M\rVert_{L_t^2L_x^4}\\
&\lesssim \sum_{M>N} M^{-3/4}\lVert |\nabla|^{3/4}u_{\leq 2M}\rVert_{L_t^2L_x^4}\\
&\lesssim_u \sum_{M>N} M^{-3/4}(1+(M|I_0|)^{1/2})\\
&\lesssim_u N^{-3/4}(1+N|I_0|)^{1/2}
\end{align*}
and (by an identical argument)
\begin{align*}
\lVert |\nabla|^{-5/4}u_{> N}\rVert_{L_t^2L_x^4}&\lesssim_u N^{-2}(1+N|I_0|)^{1/2}.
\end{align*}
Thus, we obtain
\begin{align*}
(\ref{eqabcad2a})&\lesssim_u N^{-1/2}(1+N|I_0|)^{1/3},&(\ref{eqabcad2})&\lesssim_u N^{-1}(1+N|I_0|)^{1/4}
\end{align*}
as desired.

The bounds on $\lVert \nabla u_{\leq N}\rVert_{L_{t,x}^3}$ and $\lVert \nabla u_{\leq N}\rVert_{L_t^4L_x^{20/7}}$ are obtained by interpolating ($\ref{goal2}$) with the a priori bound $(u,u_t)\in L_t^\infty(\dot{H}_x^{3/2}\times \dot{H}_x^{1/2})$.
\end{proof}

\section{Frequency-localized Morawetz estimate}
\label{sec_mor}

In this section, we obtain a frequency localized Morawetz estimate.  The proof of this result is inspired by the recent work of Visan \cite{Visan4} on the energy critical NLS.  

We begin by deriving a general form of the classical Morawetz estimate; for the classical form, see \cite{M,M2}.  To obtain this, when $u$ is a solution to $u_{tt}-\Delta u+\mathcal{N}=0$, we set
\begin{align*}
M(t)&=\int_{\mathbb{R}^5} -a_j(x)u_t(t,x)u_j(t,x)-\frac{1}{2}a_{jj}(x)u(t,x)u_t(t,x)dx,
\end{align*}
where $a:\mathbb{R}^5\rightarrow\mathbb{R}$, subscripts indicate partial derivatives, and we have used the summation convention.  A brief calculation then yields the identity
\begin{align*}
\frac{dM}{dt}(t)&=\int_{\mathbb{R}^5} a_{jk}(x)u_j(t,x)u_k(t,x)+\frac{1}{2}a_j(x)\{\mathcal{N},u\}_j-\frac{1}{4}a_{jjkk}(x)u(t,x)^2dx,
\end{align*}
with $\{f,g\}:=f\nabla g-g\nabla f$, where the subscript on $\{\mathcal{N},u\}$ denotes the $j$th component.  Taking $a(x)=|x|$, integrating in time, and using the fundamental theorem of Calculus, we then have
\begin{align}
\nonumber &\int_I\int_{\mathbb{R}^5} \left(\frac{\delta_{jk}}{|x|}-\frac{x_jx_k}{|x|^3}\right)u_j(t,x)u_k(t,x)+\frac{x_j\{\mathcal{N},u\}_j}{2|x|}+\frac{8}{|x|^3}u(t,x)^2dxdt\\
&\hspace{2.4in}\lesssim \sup_{t\in I} |M(t)|
\label{eq123a}
\end{align}
for every $I\subset \mathbb{R}$.  Moreover, the triangle inequality followed by the Cauchy-Schwartz and Hardy inequalities give
\begin{align}
\label{eq123b}|M(t)|&\lesssim \lVert u_t\rVert_{L_t^\infty L_x^2}\lVert \nabla u\rVert_{L_t^\infty L_x^2}
\end{align}
for all $t\in I$.  Combining ($\ref{eq123a}$) with ($\ref{eq123b}$), observing that the first term on the left hand side of ($\ref{eq123a}$) is non-negative and invoking an approximation argument, we obtain 
\begin{lemma}[Morawetz estimate]
Suppose $u:I\times\mathbb{R}^5\rightarrow\mathbb{R}$ solves $u_{tt}-\Delta u+\mathcal{N}=0$.  Then,
\begin{align}
\label{eqmor}\int_I\int_{\mathbb{R}^5} \frac{x\cdot \{\mathcal{N},u\}}{|x|}dxdt&\lesssim \lVert u_t\rVert_{L_t^\infty L_x^2}\lVert \nabla u\rVert_{L_t^\infty L_x^2}.
\end{align}
\end{lemma}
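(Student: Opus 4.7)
The plan is to combine the two inequalities $(\ref{eq123a})$ and $(\ref{eq123b})$ that have already been derived in the preceding discussion; the only remaining work is to verify the sign of the unwanted terms in $(\ref{eq123a})$ and to handle the non-smoothness of $a(x)=|x|$ at the origin via an approximation argument.

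The first observation is that the two contributions on the left-hand side of $(\ref{eq123a})$ other than the $\{\mathcal{N},u\}$ term are pointwise nonnegative, so they can be discarded. The tensor $\delta_{jk}/|x|-x_j x_k/|x|^3$ equals $|x|^{-1}$ times $(I-\hat{x}\hat{x}^T)$, the orthogonal projection onto the tangent space to the sphere of radius $|x|$ at $x$; it is therefore positive semidefinite and
\[
\int_{\mathbb{R}^5}\Big(\frac{\delta_{jk}}{|x|}-\frac{x_j x_k}{|x|^3}\Big) u_j u_k\, dx \geq 0
\]
pointwise in time. For the coefficient of $u^2$, a direct computation in $\mathbb{R}^5$ gives $a_{jj}=\Delta|x|=4/|x|$ and $a_{jjkk}=\Delta(4/|x|)$, which is a negative constant multiple of $|x|^{-3}$ on $\mathbb{R}^5\setminus\{0\}$; hence $-\tfrac{1}{4}a_{jjkk}$ is a positive multiple of $|x|^{-3}$, so this term also has the favorable sign and can be dropped. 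What remains of $(\ref{eq123a})$ is exactly
\[
\int_I\int_{\mathbb{R}^5}\frac{x\cdot\{\mathcal{N},u\}}{2|x|}\,dx\,dt \lesssim \sup_{t\in I}|M(t)|,
\]
and inserting $(\ref{eq123b})$ (absorbing the factor of $2$ into the implicit constant) yields the stated inequality.

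The one technicality is that $a(x)=|x|$ is not $C^2$ at the origin, so the pointwise differentiation of $M(t)$ and the integrations by parts used to obtain $(\ref{eq123a})$ must be justified carefully. I would handle this by the standard regularization $a_\epsilon(x):=\sqrt{|x|^2+\epsilon^2}$, which is smooth on all of $\mathbb{R}^5$, agrees with $|x|$ in the limit $\epsilon\to 0$, and whose first and higher derivatives converge pointwise to those of $|x|$ on $\mathbb{R}^5\setminus\{0\}$. Running the whole computation with $a_\epsilon$ in place of $|x|$ and passing to $\epsilon\to 0$ is justified by dominated convergence: $|x|^{-3}$ is locally integrable in $\mathbb{R}^5$ (since $d=5\geq 3$), and Hardy's inequality (valid for $d\geq 3$) controls the weighted norm $\lVert u/|x|\rVert_{L_x^2}\lesssim \lVert \nabla u\rVert_{L_x^2}$ that arises when estimating the $a_{jj} u u_t$ term in $(\ref{eq123b})$. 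I expect this approximation step to be the main---but still mild---obstacle, since everything else reduces to bookkeeping from the identities already established.
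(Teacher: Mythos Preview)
Your proposal is correct and follows exactly the route the paper takes: combine $(\ref{eq123a})$ with $(\ref{eq123b})$, discard the nonnegative angular-gradient and $|x|^{-3}u^2$ terms, and justify the singular weight $a(x)=|x|$ by approximation. If anything you have supplied more detail than the paper, which only remarks that the first term is nonnegative and then ``invok[es] an approximation argument'' without specifying it.
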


We also recall the following Hardy-type bound, which will be used to estimate the error terms resulting from the frequency localization.
\begin{proposition}[Hardy-type bound, \cite{Beckner}]  Fix $1<p<\infty$, and $0\leq \alpha<5$.  Then there exists $C=C(\alpha,p)>0$ such that for every $g\in\mathcal{S}(\mathbb{R}^5)$,
\begin{align}
 \lVert |x|^{-\alpha/p}g\rVert_{L_x^p(\mathbb{R}^5)}&\leq C(\alpha,p)\lVert |\nabla|^{\alpha/p}g\rVert_{L_x^p(\mathbb{R}^5)}.\label{hardy}
\end{align}
\end{proposition}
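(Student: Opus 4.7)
The plan is to recognize this inequality as the borderline $L^p \to L^p$ case of the Stein--Weiss weighted Hardy--Littlewood--Sobolev inequality (commonly known as Pitt's inequality), which is precisely the content of the cited Beckner reference. To sketch a proof, I would set $f := |\nabla|^{\alpha/p} g$ so that $g = I_{\alpha/p} f$, where $I_s$ denotes the Riesz potential with Fourier multiplier $|\xi|^{-s}$ and kernel $c_{5,s}|x-y|^{s-5}$ for $0 < s < 5$. The claim then reduces to establishing the $L^p$-boundedness of the weighted Riesz-potential operator
\[ Tf := |x|^{-\alpha/p}\, I_{\alpha/p} f. \]
The case $\alpha = 0$ is vacuous, so we may assume $0 < \alpha < 5$.

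The next step is to verify the hypotheses of Pitt's inequality for $T$. Writing the weighted bound in the standard form $\| |x|^{-\beta} I_s f\|_{L^p} \lesssim \| |x|^\gamma f\|_{L^p}$ with $\beta = \alpha/p$, $\gamma = 0$, $s = \alpha/p$, one requires the scaling identity $\beta + \gamma = s$ (automatic in the diagonal case), the nonnegativity condition $\beta + \gamma \geq 0$ (immediate from $\alpha \geq 0$), and the strict upper bounds $\beta < 5/p$ and $\gamma < 5/p'$. These last two conditions reduce respectively to $\alpha < 5$ and $1 < p < \infty$, which are precisely the standing hypotheses of the proposition.

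The main obstacle is that we are in the diagonal case $p = q$, which lies outside the range of the original Stein--Weiss theorem (proved under the strict condition $p < q$); bridging this gap is exactly the refinement supplied by Pitt's/Beckner's result. If one wished to bypass the citation and give a self-contained argument, a Littlewood--Paley approach would be natural: decompose $g = \sum_N P_N g$, use the pointwise bound $|P_N g(x)| \lesssim N^{-\alpha/p}\, \mathcal{M}(|\nabla|^{\alpha/p} g)(x)$ (together with fast-decay corrections coming from the tail of the Littlewood--Paley kernel), split the dyadic sum according to whether $N|x| \leq 1$ or $N|x| \geq 1$, and sum the resulting geometric series. Boundedness of the Hardy--Littlewood maximal function on $L^p$---precisely the point at which $p > 1$ enters---then produces the desired weight $|x|^{-\alpha/p}$ on the left-hand side. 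The endpoint $p = 1$ genuinely fails, consistent with the restriction $1 < p < \infty$ in the statement.
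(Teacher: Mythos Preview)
Your identification is correct: the paper does not prove this proposition at all but simply quotes it from Beckner's paper on Pitt's inequality, so there is no ``paper's own proof'' to compare against. Your reduction to the Stein--Weiss/Pitt framework, together with the verification that the exponent conditions $\beta<5/p$ and $\gamma<5/p'$ match exactly the hypotheses $\alpha<5$ and $1<p<\infty$, is precisely the content of the cited reference; the additional Littlewood--Paley sketch you offer goes beyond what the paper provides and would furnish a self-contained argument if one were desired.
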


In particular, we prove the following:
\begin{lemma}[Frequency localized Morawetz estimate]
\label{lem1}
If $u:I\times\mathbb{R}^5\rightarrow\mathbb{R}$ is an almost periodic solution to (NLW) on $I^+=\cup J_k\subset \mathbb{R}$ with $N(t)=N_k\geq 1$ on each $J_k$ and $(u,u_t)\in L_t^\infty (\dot{H}_x^{3/2}\times\dot{H}_x^{1/2})$, then for any $\eta>0$ there exists $N_0=N_0(\eta)>0$ such that for all $N\leq N_0$ one has
\begin{align*}
\int_{I_0}\int_{\mathbb{R}^5} \frac{|u_{\geq N}(t)|^4}{|x|}dxdt&\leq \eta C(u)(N^{-1}+|I_0|)
\end{align*}
on any compact interval $I_0=\cup J_k$.
\end{lemma}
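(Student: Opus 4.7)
The plan is to apply the general Morawetz inequality \eqref{eqmor} to $u_{\geq N}$, which satisfies the wave equation $\partial_{tt} u_{\geq N} - \Delta u_{\geq N} + P_{\geq N}(u^3) = 0$. Writing $u = u_{<N} + u_{\geq N}$ and using $P_{\geq N}(u_{\geq N}^3) = u_{\geq N}^3 - P_{<N}(u_{\geq N}^3)$, the nonlinearity decomposes as $P_{\geq N}(u^3) = u_{\geq N}^3 + \mathcal{E}$, where
\begin{align*}
\mathcal{E} = -P_{<N}(u_{\geq N}^3) + 3P_{\geq N}(u_{\geq N}^2 u_{<N}) + 3P_{\geq N}(u_{\geq N} u_{<N}^2) + P_{\geq N}(u_{<N}^3).
\end{align*}
The principal piece satisfies $\{u_{\geq N}^3, u_{\geq N}\} = -\tfrac{1}{2}\nabla(u_{\geq N}^4)$; pairing with $x/|x|$ and integrating by parts (using $\nabla\cdot(x/|x|) = 4/|x|$ on $\mathbb{R}^5$) produces the nonnegative target quantity $\sim \int_{I_0}\int_{\mathbb{R}^5} u_{\geq N}^4/|x|\,dx\,dt$ on the left-hand side of \eqref{eqmor}. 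It thus suffices to bound both the Morawetz right-hand side $\lVert u_{\geq N,t}\rVert_{L_t^\infty L_x^2}\lVert \nabla u_{\geq N}\rVert_{L_t^\infty L_x^2}$ and the error $\bigl|\int_{I_0}\int_{\mathbb{R}^5} \frac{x}{|x|}\cdot\{\mathcal{E},u_{\geq N}\}\,dx\,dt\bigr|$ by $\eta C(u)(N^{-1}+|I_0|)$.

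For the Morawetz right-hand side, I would introduce an auxiliary threshold $M_0=M_0(\eta)>0$ and apply the Bernstein-type splitting
\begin{align*}
\lVert \nabla u_{\geq N}\rVert_{L_x^2}^2 \leq N^{-1}\lVert u_{\leq M_0}\rVert_{\dot{H}_x^{3/2}}^2 + M_0^{-1}\lVert u\rVert_{\dot{H}_x^{3/2}}^2,
\end{align*}
together with the analogous bound for $\lVert u_{\geq N,t}\rVert_{L_x^2}$. By Remark \ref{rem_limit} one may fix $M_0$ small enough that $\lVert u_{\leq M_0}\rVert_{\dot{H}_x^{3/2}}$ and $\lVert u_{t,\leq M_0}\rVert_{\dot{H}_x^{1/2}}$ are each at most $\sqrt{\eta}$; restricting further to $N \leq \eta M_0$ then gives $\lVert \nabla u_{\geq N}\rVert_{L_x^2}\lVert u_{\geq N,t}\rVert_{L_x^2} \leq \eta C(u) N^{-1}$, as required.

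For the error term, each of the four pieces of $\mathcal{E}$ either carries an explicit low-frequency factor $u_{<N}$ or, in the case of $P_{<N}(u_{\geq N}^3)$, admits a Bernstein gain of $N$ from the low-frequency projector. I would integrate by parts to distribute derivatives onto favorable factors, apply H\"older's inequality to separate the pieces, and use the Hardy bound \eqref{hardy} where necessary to absorb the weight $|x|^{-1}$ into a fractional Sobolev norm. The high-frequency factors of $u_{\geq N}$ are controlled via the Strichartz bounds $\lVert u_{>N}\rVert_{L_t^3 L_x^{30/7}} \leq C(u) N^{-1/2}(1+N|I_0|)^{1/3}$ and $\lVert u_{>N}\rVert_{L_t^4 L_x^{20/7}} \leq C(u) N^{-1}(1+N|I_0|)^{1/4}$ from Corollary \ref{cor_floc_str}, while the decisive factor of $\eta$ comes from the small-norm bounds $\lVert \nabla u_{\leq N}\rVert_{L_t^3 L_x^3}$ and $\lVert \nabla u_{\leq N}\rVert_{L_t^4 L_x^{20/7}} \leq C(u)\eta(1+N|I_0|)^{1/q}$ valid for $N \leq N_0(\eta)$. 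The exponents are chosen so that products of these factors reconstruct exactly the target $\eta C(u)(N^{-1}+|I_0|)$.

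The main obstacle is the combinatorial bookkeeping across the four error contributions: ensuring that the powers of $N$ and $|I_0|$ combine compatibly in each piece, and that the $P_{<N}(u_{\geq N}^3)$ term---the only one lacking an explicit low-frequency smallness---is handled via its Bernstein gain together with an integration by parts arranged to remain compatible with the Hardy inequality \eqref{hardy}.
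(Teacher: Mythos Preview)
Your approach is sound and takes a somewhat different route from the paper. The paper does not isolate $u_{\geq N}^3$ directly; instead it writes
\begin{align*}
\{P_{\geq N}[u^3],u_{\geq N}\}=\{u^3,u\}-\{u_{<N}^3,u_{<N}\}-\{u^3-u_{<N}^3,u_{<N}\}-\{P_{<N}[u^3],u_{\geq N}\},
\end{align*}
applies $\{f^3,f\}=-\tfrac12\nabla(f^4)$ to the first two brackets to produce $\int (|u|^4-|u_{<N}|^4)/|x|$, and only then recovers $\int|u_{\geq N}|^4/|x|$ via an $\epsilon$-absorption trick (splitting the cross term $\int |u_{<N}|\,|u_{\geq N}|^3/|x|$ into a small multiple of the target quantity plus a remainder). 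Your decomposition extracts $\int|u_{\geq N}|^4/|x|$ at once and avoids the absorption step, at the cost of carrying the projectors $P_{\geq N}$, $P_{<N}$ through more error pieces; after writing $P_{\geq N}=I-P_{<N}$ and integrating by parts, your errors reduce to essentially the same collection of integrals the paper estimates, so the two arguments are close cousins. (Incidentally, the cross term just mentioned can also be bounded directly by $\lVert\nabla u_{<N}\rVert_{L_t^3L_x^3}\lVert u_{\geq N}\rVert_{L_t^3L_x^{30/7}}^2\lVert u_{\geq N}\rVert_{L_t^\infty L_x^5}$ using Hardy and Corollary~\ref{cor_floc_str}, so the $\epsilon$-trick is not essential in either framework.)

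There is one point where your sketch is incomplete. For the piece $-P_{<N}(u_{\geq N}^3)$ you say the Bernstein gain of $N$ from the low projector is what handles it. The Bernstein factor fixes the $N$-scaling, but it does not by itself produce the factor of $\eta$: every factor in that term is $u_{\geq N}$, and none of the high-frequency bounds in Corollary~\ref{cor_floc_str} carry smallness. The paper obtains the missing $\eta$ by reusing the bound $\lVert\nabla u_{\geq N}\rVert_{L_t^\infty L_x^2}\lesssim_u \eta^{1/2}N^{-1/2}$ (equivalently $\lVert u_{\geq N}\rVert_{L_t^\infty L_x^{10/3}}\lesssim_u\eta^{1/2}N^{-1/2}$ via Sobolev) that you already established for the Morawetz right-hand side; combined with $\lVert u_{\geq N}\rVert_{L_t^3L_x^{30/7}}^3\lesssim_u N^{-3/2}(1+N|I_0|)$ and the Bernstein $N$, this closes the estimate. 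You have the ingredient in hand---just be explicit that this, and not the Bernstein gain alone, is the source of smallness for that term.
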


\begin{proof}
Fix a compact time interval $I_0=\cup J_k\subset I^+$.  In what follows, all spacetime norms will be taken over $I_0\times\mathbb{R}^5$, unless otherwise indicated.  Let $\eta>0$ be given, and fix $N_0>0$ to be determined later in the argument.  Let $N\leq N_0$ be given.  We begin by observing that the Morawetz estimate ($\ref{eqmor}$) applied to $u_{\geq N}$ yields
\begin{align}
\int_{I_0}\int_{\mathbb{R}^5} \frac{x\cdot \{P_{\geq N}[u(t)^3],u_{\geq N}\}}{|x|}dxdt&\lesssim \lVert \partial_tu_{\geq N}\rVert_{L_t^\infty L_x^2}\lVert \nabla u_{\geq N}\rVert_{L_t^\infty L_x^2}\label{eqa1}
\end{align}
Note that by Remark $\ref{rem_limit}$, we may choose $N_1>0$ so that $N\leq N_1$ implies 
\begin{align*}
\lVert (u_{\leq N},\partial_t u_{\leq N})\rVert_{L_t^\infty(I;\dot{H}_x^{3/2}\times\dot{H}_x^{1/2})}<\eta^{1/2}.
\end{align*}
Now, by choosing $N_0$ small enough so that $N_0<\eta N_1$, we may estimate the right hand side of ($\ref{eqa1}$) by 
\begin{align}
\nonumber &(\lVert \partial_t u_{N\leq \cdot <N_1}\rVert_{L_t^\infty L_x^2}+\lVert \partial_t u_{\geq N_1}\rVert_{L_t^\infty L_x^2})\cdot (\lVert \nabla u_{N\leq \cdot <N_1}\rVert_{L_t^\infty L_x^2}+\lVert \nabla u_{\geq N_1}\rVert_{L_t^\infty L_x^{2}})\\
\nonumber &\hspace{0.2in}\lesssim (N^{-1/2}\lVert \partial_t u_{<N_1}\rVert_{L_t^\infty \dot{H}_x^{1/2}}+N_1^{-1/2}\lVert \partial_t u_{\geq N_1}\rVert_{L_t^\infty \dot{H}_x^{1/2}})\\
\nonumber &\hspace{0.4in}\cdot (N^{-1/2}\lVert u_{<N_1}\rVert_{L_t^\infty \dot{H}_x^{3/2}}+N_1^{-1/2}\lVert u\rVert_{L_t^\infty \dot{H}_x^{3/2}})\\
&\hspace{0.2in}\lesssim_u \eta^2 N^{-1}.\label{bound1}
\end{align}

We now estimate the left hand side of ($\ref{eqa1}$).  For this, we use the identity
\begin{align*}
\{P_{\geq N}[u(t)^3],u_{\geq N}(t)\}&=\{u(t)^3,u(t)\}-\{u_{<N}(t)^3,u_{<N}(t)\}\\
&\hspace{0.2in}-\{u(t)^3-u_{<N}(t)^3,u_{<N}(t)\}\\
&\hspace{0.2in}-\{P_{< N}[u(t)^3],u_{\geq N}(t)\}
\end{align*}
to obtain
\begin{align}
&\nonumber \int_{I_0}\int_{\mathbb{R}^5} \frac{x\cdot \{P_{\geq N}[u(t)^3],u_{\geq N}(t)\}}{|x|}dxdt\\
&\nonumber \hspace{0.2in}=\int_{I_0}\int_{\mathbb{R}^5} \frac{x\cdot \{u(t)^3,u(t)\}}{|x|}-\frac{x\cdot \{u_{<N}(t)^3,u_{<N}(t)\}}{|x|}dxdt\\
&\nonumber \hspace{0.4in}-\int_{I_0}\int_{\mathbb{R}^5} \frac{x\cdot \{u(t)^3-u_{<N}(t)^3,u_{<N}(t)\}}{|x|}dxdt\\
\label{eqa2}&\hspace{0.4in}-\int_{I_0}\int_{\mathbb{R}^5} \frac{x\cdot \{P_{<N}[u(t)^3],u_{\geq N}(t)\}}{|x|}dxdt.
\end{align}
A simple calculation then shows $\{f^3,f\}=-\frac{1}{2}\nabla[f^4]$, so that integrating the first two terms in ($\ref{eqa2}$) by parts gives
\begin{align}
&\int_{I_0}\int_{\mathbb{R}^5} \frac{x\cdot \{P_{\geq N}[u(t)^3],u_{\geq N}(t)\}}{|x|}dxdt\nonumber \\
&\hspace{0.2in}=\int_{I_0}\int_{\mathbb{R}^5} \frac{2(|u(t)|^4-|u_{<N}(t)|^4)}{|x|}dxdt\nonumber \\
&\hspace{0.4in}-\int_{I_0}\int_{\mathbb{R}^5} \frac{x\cdot \{u(t)^3-u_{<N}(t)^3,u_{<N}(t)\}}{|x|}dxdt\nonumber \\
&\hspace{0.4in}-\int_{I_0}\int_{\mathbb{R}^5} \frac{x\cdot \{P_{<N}[u(t)^3],u_{\geq N}(t)\}}{|x|}dxdt.\label{eqabcad234}
\end{align}
On the other hand, applying the decomposition $u=u_{<N}+u_{\geq N}$ gives
\begin{align*}
\int_{I_0}\int_{\mathbb{R}^5} \frac{|u_{\geq N}(t)|^4}{|x|}dxdt&\lesssim \int_{I_0}\int_{\mathbb{R}^5} \frac{|u(t)|^4-|u_{<N}(t)|^4}{|x|}dxdt\\
&\hspace{0.4in} +\sum_{i=1}^3 \int_{I_0}\int_{\mathbb{R}^5}\frac{|u_{<N}(t)|^{4-i}|u_{\geq N}(t)|^{i}}{|x|}dxdt.
\end{align*}
In view of ($\ref{eqa1}$) and ($\ref{eqabcad234}$), we therefore obtain the bound
\begin{align*}
\int_{I_0}\int_{\mathbb{R}^5} \frac{|u_{\geq N}(t)|^4}{|x|}dxdt&\lesssim \eta N^{-1}+\sum_{i=1}^3 (I)_i+(II)+(III),
\end{align*}
where we have set
\begin{align*}
(I)_i&=\int_{I_0}\int_{\mathbb{R}^5}\frac{|u_{<N}(t)|^{4-i}|u_{\geq N}(t)|^{i}}{|x|}dxdt,\quad i=1,\cdots,3,\\
(II)&=\bigg|\int_{I_0}\int_{\mathbb{R}^5} \frac{x\cdot \{u(t)^3-u_{<N}(t)^3,u_{<N}(t)\}}{|x|}dxdt\bigg|,\quad \textrm{and}\\
(III)&=\bigg|\int_{I_0}\int_{\mathbb{R}^5} \frac{x\cdot \{P_{<N}[u(t)^3],u_{\geq N}(t)\}}{|x|}dxdt\bigg|.
\end{align*}

We estimate each of these terms individually.  For $(I)_i$, we use the H\"older inequality with the Hardy-type bound ($\ref{hardy}$), along with the Sobolev embedding and Corollary $\ref{cor_floc_str}$ (after choosing $N_0$ sufficiently small) to obtain $\ref{floc_str}$ to obtain the bounds
\begin{align*}
(I)_1&\lesssim \lVert u_{\geq N}\rVert_{L_t^4L_x^{20/7}}\left\lVert \frac{u_{< N}}{|x|^{1/3}}\right\rVert_{L_t^4 L_x^{60/13}}^3\\
&\lesssim \lVert u_{\geq N}\rVert_{L_t^4L_x^{20/7}}\lVert |\nabla|^{1/3}u_{< N}\rVert_{L_t^4L_x^\frac{60}{13}}^3\\
&\lesssim \lVert u_{\geq N}\rVert_{L_t^4L_x^{20/7}}\lVert \nabla u_{<N}\rVert_{L_t^4L_x^{20/7}}^3\\
&\lesssim_u \eta^3 (N^{-1}+|I_0|).
\end{align*}

For the $(I)_2$, we write
\begin{align*}
(I)_2&\lesssim \int_{I_0}\int_{\mathbb{R}^5} \frac{|u_{<N}(t)|\, |u_{\geq N}(t)|}{|x|}(|u_{<N}(t)|^2+|u_{\geq N}(t)|^2)dxdt\\
&\lesssim (I)_1+(I)_3,
\end{align*}
while for the term $(I)_3$, we note that for each $\epsilon>0$,
\begin{align*}
(I)_3&\lesssim \int_{I_0}\int_{\{x:|u_{<N}(t,x)|\leq \epsilon |u_{\geq N}(t,x)|\}}\frac{|u_{<N}(t)|\, |u_{\geq N}(t)|^3}{|x|}dxdt\\
&\hspace{0.2in}+\int_{I_0}\int_{\{x:|u_{<N}(t,x)|>\epsilon |u_{\geq N}(t,x)|\}}\frac{|u_{<N}(t)|\, |u_{\geq N}(t)|^3}{|x|}dxdt\\
&\leq \epsilon\int_{I_0}\int_{\mathbb{R}^5} \frac{|u_{\geq N}(t)|^4}{|x|}dxdt+\frac{1}{\epsilon}(I)_1.
\end{align*}

We now estimate term $(II)$.  Using the identity 
\begin{align*}
\{u^3-u_{<N}^3,u_{<N}\}=2(u^3-u_{<N}^3)\nabla u_{<N}-\nabla ((u^3-u_{<N}^3)u_{<N}),
\end{align*}
we apply the triangle inequality and integrate by parts in the second term of the resulting integral to obtain
\begin{align*}
(II)&\lesssim \int_{I_0}\int_{\mathbb{R}^5} |(u(t)^3-u_{<N}(t)^3)|\,|\nabla u_{<N}(t)|dxdt\\
&\hspace{0.2in}+\int_{I_0}\int_{\mathbb{R}^5} \frac{|(u(t)^3-u_{<N}(t)^3)|\,|u_{<N}(t)|}{|x|}dxdt\\
&\lesssim \sum_{i=1}^3\int_{I_0}\int_{\mathbb{R}^5} |u_{\geq N}(t)|^{i}|u_{< N}(t)|^{3-i}|\nabla u_{<N}(t)|dxdt+\sum_{i=1}^3 (I)_i
\end{align*}

We now use the H\"older inequality, Sobolev embedding, and Corollary $\ref{cor_floc_str}$ to estimate the first term,
\begin{align*}
\lVert u_{\geq N}u_{<N}^2\nabla u_{<N}\rVert_{L_{t,x}^1}&\leq \lVert u_{\geq N}\rVert_{L_t^4L_x^{20/7}}\lVert u_{<N}\rVert_{L_t^2L_x^{10}}\lVert u_{<N}\rVert_{L_t^\infty L_x^5}\lVert \nabla u_{<N}\rVert_{L_t^4L_x^{20/7}}\\
&\lesssim_u \lVert u_{\geq N}\rVert_{L_t^4L_x^{20/7}}\lVert |\nabla|^{3/4}u_{<N}\rVert_{L_t^2L_x^4}\lVert \nabla u_{<N}\rVert_{L_t^4L_x^{20/7}}\\
&\lesssim_u \eta N^{-1}(1+N|I_0|),
\end{align*}
the second term,
\begin{align*}
\lVert u_{\geq N}^2u_{<N}\nabla u_{<N}\rVert_{L_{t,x}^1}&\leq \lVert u_{\geq N}\rVert_{L_t^4L_x^{20/7}}\lVert u_{\geq N}\rVert_{L_t^\infty L_x^5}\lVert u_{<N}\rVert_{L_t^2L_x^{10}}\lVert \nabla u_{<N}\rVert_{L_t^4L_x^{20/7}}\\
&\lesssim_u \lVert u_{\geq N}\rVert_{L_t^4L_x^{20/7}}\lVert |\nabla|^{3/4}u_{<N}\rVert_{L_t^2L_x^4}\lVert \nabla u_{<N}\rVert_{L_t^4L_x^{20/7}}\\
&\lesssim_u \eta N^{-1}(1+N|I_0|),
\end{align*}
and the third term,
\begin{align*}
\lVert u_{\geq N}^3\nabla u_{<N}&\leq \lVert u_{\geq N}\rVert_{L_t^3L_x^{30/7}}^2\lVert u_{\geq N}\rVert_{L_t^\infty L_x^5}\lVert \nabla u_{<N}\rVert_{L_{t,x}^3}\\
&\lesssim_u \eta N^{-1}(1+N|I_0|).
\end{align*}

Combining these estimates then gives
\begin{align*}
(II)&\lesssim_u \eta N^{-1}(1+N|I_0|)+\sum_{i=1}^3 (I)_i.
\end{align*}

To continue, we estimate the remaining term, $(III)$.  In a similar manner as above, we use the identity
\begin{align*}
\{P_{<N}[u(t)^3],u_{\geq N}(t)\}=\nabla ( P_{<N}[u(t)^3]u_{\geq N}(t))-2u_{\geq N}(t)\nabla P_{<N}[u(t)^3]
\end{align*}
and integrate by parts in the first term of the resulting integral to obtain
\begin{align*}
(III)&\lesssim \int_{I_0}\int_{\mathbb{R}^5} \frac{|P_{<N}[u(t)^3]u_{\geq N}(t)|}{|x|}dxdt+\int_{I_0}\int_{\mathbb{R}^5} |u_{\geq N}(t)\nabla P_{<N}[u(t)^3]|dxdt\\
&\lesssim \sum_{i=0}^3 \left\lVert \frac{P_{<N}[u_{<N}(t)^iu_{\geq N}(t)^{3-i}]u_{\geq N}}{|x|}\right\rVert_{L_{t,x}^1}+\lVert u_{\geq N}\nabla P_{<N}[u_{<N}^iu_{\geq N}^{3-i}]\rVert_{L_{t,x}^1}
\end{align*}
We estimate the terms containing the gradient and remark that the other terms may then be bounded through the use of the Hardy-type inequality ($\ref{hardy}$).  In particular, we apply the H\"older, Bernstein, and Sobolev inequalities along with Corollary $\ref{cor_floc_str}$ to obtain, for the first term (using the bound from $(\ref{bound1})$),
\begin{align*}
 \lVert u_{\geq N}\nabla P_{<N}[u_{\geq N}^3]\rVert_{L_{t,x}^1}&\leq \lVert u_{\geq N}\rVert_{L_t^{\infty}L_x^{10/3}}\lVert \nabla P_{<N}[u_{\geq N}^3]\rVert_{L_t^1L_x^{10/7}}\\
&\lesssim N\lVert u_{\geq N}\rVert_{L_t^\infty \dot{H}_x^1}\lVert u_{\geq N}\rVert_{L_t^3L_x^{30/7}}^3\\
&\lesssim_u \eta N^{-1}(1+N|I_0|),
\end{align*}
for the second term,
\begin{align*}
\lVert u_{\geq N}\nabla P_{<N}[u_{<N}u_{\geq N}^2]\rVert_{L_{t,x}^1}&\leq \lVert u_{\geq N}\rVert_{L_t^\infty L_x^5}\lVert \nabla P_{<N}[u_{<N}u_{\geq N}^2]\rVert_{L_t^1L_x^{5/4}}\\
&\lesssim_u N\lVert u_{<N}\rVert_{L_t^2L_x^{10}}\lVert u_{\geq N}\rVert_{L_t^4L_x^{20/7}}^2\\
&\lesssim_u N\lVert |\nabla|^{3/4}u_{<N}\rVert_{L_t^2L_x^4}\lVert u_{\geq N}\rVert_{L_t^4L_x^{20/7}}^2\\
&\lesssim_u \eta N^{-1}(1+N|I_0|),
\end{align*}
for the third term,
\begin{align*}
\lVert u_{\geq N}\nabla P_{<N}[u_{<N}^2u_{\geq N}]\rVert_{L_{t,x}^1}&\leq \lVert u_{\geq N}\rVert_{L_t^4L_x^{20/7}}\lVert \nabla P_{<N}[u_{<N}^2u_{\geq N}]\rVert_{L_t^{4/3}L_x^{20/13}}\\
&\lesssim N\lVert u_{\geq N}\rVert_{L_t^4L_x^{20/7}}^2\lVert u_{<N}\rVert_{L_t^4L_x^{20/3}}^2\\
&\lesssim N\lVert u_{\geq N}\rVert_{L_t^4L_x^{20/7}}^2\lVert \nabla u_{<N}\rVert_{L_t^4L_x^{20/7}}^2\\
&\lesssim_u \eta N^{-1}(1+N|I_0|),
\end{align*}
and for the fourth term,
\begin{align*}
\lVert u_{\geq N}\nabla P_{<N}[u_{<N}^3]\rVert_{L_{t,x}^1}&\leq \lVert u_{\geq N}\rVert_{L_t^4L_x^{20/7}}\lVert \nabla P_{<N}[u_{<N}^3]\rVert_{L_t^{4/3}L_x^{20/13}}\\
&=\lVert u_{\geq N}\rVert_{L_t^4L_x^{20/7}}\lVert u_{<N}^2\nabla u_{<N}\rVert_{L_t^{4/3}L_x^{20/13}}\\
&\leq \lVert u_{\geq N}\rVert_{L_t^4L_x^{20/7}}\lVert  u_{<N}\rVert_{L_t^\infty L_x^5}\lVert u_{<N}\rVert_{L_t^2L_x^{10}}\lVert \nabla u_{<N}\rVert_{L_t^4L_x^{20/7}}\\
&\lesssim_u \eta N^{-1}(1+N|I_0|).
\end{align*}

Combining these estimates, we obtain
\begin{align*}
\int_{I_0}\int_{\mathbb{R}^5} \frac{|u_{\geq N}(t)|^4}{|x|}dxdt&\lesssim_u \eta (N^{-1}+|I_0|)+C_\epsilon(I)_1+\epsilon\int_{I_0}\int_{\mathbb{R}^5} \frac{|u_{\geq N}(t)|^4}{|x|}dxdt.
\end{align*}
Choosing $\epsilon$ sufficiently small, we obtain
\begin{align*}
\int_{I_0}\int_{\mathbb{R}^5} \frac{|u_{\geq N}(t)|^4}{|x|}dxdt&\lesssim_u 3\eta (N^{-1}+|I_0|)
\end{align*}
as desired.
\end{proof}

\section{Finite time blow-up solution}
\label{sec_fintime}
In this section, we rule out the existence of finite time blow-up solutions satisfying the properties stated in Theorem $\ref{thm_char_intervals}$.  Arguing as in \cite{BulutCubic,KM,KV3,KVradial}, this is accomplished by showing that such solutions must have zero energy, which in the defocusing case implies that the solution must be identically zero, contradicting its blow up.  In particular, we have the following theorem:
\begin{theorem}
Suppose that $u$ is an almost periodic solution to (NLW) with maximal interval of existence $I$, satisfying the properties given in Theorem $\ref{thm_char_intervals}$.  Then the case $|I^+|<\infty$ cannot occur.
\end{theorem}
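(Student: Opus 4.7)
The plan is to argue by contradiction: assume $u$ is as in Theorem~\ref{thm_char_intervals} with $T^* := \sup I^+ < \infty$, and show that $(u,u_t)$ must have spatial support shrinking to $\{0\}$ as $t \to T^*$. This will force the conserved energy to vanish, so that the defocusing sign of the nonlinearity gives $(u,u_t) \equiv 0$ on $I^+$, contradicting $\lVert u\rVert_{L_{t,x}^6(I\times\mathbb{R}^5)} = \infty$. The first reductions are to take $x(t) \equiv 0$ using radial symmetry, and to verify that $N(t) \to \infty$ as $t \to T^*$; the latter follows because if $N(t_n)$ stayed bounded along some sequence $t_n \to T^*$, the compactness of $\{(u(t_n),u_t(t_n))\}$ in $\dot{H}_x^{3/2} \times \dot{H}_x^{1/2}$ provided by Remark~\ref{rem_apalt}, together with local well-posedness, would yield a uniform lower bound on the lifespan at $t_n$ and allow the solution to be continued past $T^*$, violating maximality.

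The heart of the argument is the shrinking-support claim: for every $t \in I^+$,
\[
\supp u(t) \cup \supp u_t(t) \;\subseteq\; \overline{B(0,\,T^* - t)}.
\]
I would fix $\eta > 0$, choose $t_n \to T^*$, and smoothly truncate $(u(t_n),u_t(t_n))$ to the ball $B(0, C(\eta)/N(t_n))$. The almost-periodicity bounds combined with a commutator estimate (via the Hardy-type inequality~\eqref{hardy}) give that the truncated data differs from the original by $O(\sqrt{\eta})$ in $\dot{H}_x^{3/2} \times \dot{H}_x^{1/2}$. Evolving the truncation backward from $t_n$ to $t$, finite speed of propagation confines the support of the resulting solution to a ball of radius $(t_n - t) + C(\eta)/N(t_n)$, while a critical-regularity stability lemma keeps this evolution within $O(\sqrt{\eta})$ of $u(t)$. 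Letting $n \to \infty$ (so $N(t_n) \to \infty$ and $t_n \to T^*$) and then $\eta \to 0$ yields the claim.

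Once the support bound is established, the rest is short. Writing $R(t) = T^* - t$ and using the Sobolev embeddings $\dot{H}_x^{3/2}(\mathbb{R}^5) \hookrightarrow L_x^5$ and $\dot{H}_x^{1/2}(\mathbb{R}^5) \hookrightarrow L_x^{5/2}$ together with H\"older's inequality on $B(0,R(t))$,
\[
\lVert \nabla u(t)\rVert_{L_x^2}^2 + \lVert u_t(t)\rVert_{L_x^2}^2 + \lVert u(t)\rVert_{L_x^4}^4 \;\lesssim_u\; R(t),
\]
so $E(u(t),u_t(t))$ is finite and tends to $0$ as $t \to T^*$. By conservation of energy on $I^+$ the energy is identically zero, and since the equation is defocusing this forces $(u,u_t) \equiv 0$, giving the desired contradiction.

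The main obstacle I anticipate is the rigorous justification of the shrinking-support step, in particular controlling the perturbative backward evolution at the critical regularity $\dot{H}_x^{3/2} \times \dot{H}_x^{1/2}$ on the interval $[t,t_n]$, whose length does not shrink with $n$. Making the truncation commutator bound and the stability estimate close uniformly in $n$, all while preserving compact spatial support, is standard in this circle of ideas (cf.~\cite{KM,KVradial}) but requires some care; by contrast, the final energy-vanishing step is essentially a one-line consequence of the support bound and the available Sobolev estimates.
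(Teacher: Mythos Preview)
Your overall strategy coincides with the paper's: assume $T^*<\infty$, show that $(u(t),u_t(t))$ is supported in $\overline{B(0,T^*-t)}$, and conclude via H\"older and Sobolev that the conserved energy vanishes, forcing $u\equiv 0$. The final energy-vanishing step and the observation that $N(t)\to\infty$ as $t\to T^*$ are handled just as you describe. The divergence is in how the shrinking-support claim is established.

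Your truncation-plus-stability route, as written, has the very gap you anticipate, and it is not merely a matter of care: the stability lemma on $[t,t_n]$ requires the perturbation at time $t_n$ to be small relative to a threshold depending on $\lVert u\rVert_{L_{t,x}^6([t,t_n])}$, and since $T^*$ is a blow-up time this norm diverges as $t_n\to T^*$. Thus for fixed $\eta$ you cannot close the stability argument uniformly in $n$, while choosing $\eta=\eta(n)$ small enough destroys the limit $C(\eta)/N(t_n)\to 0$. The paper sidesteps this entirely by not truncating: it uses Remark~\ref{rem_apalt} to get $\int_{|x|\ge R/N(s)}\big(|\nabla u|^{5/2}+|u_t|^{5/2}\big)\,dx<\epsilon$, and then applies finite speed of propagation \emph{directly to $u$} (as a monotonicity inequality for the exterior $L_x^{5/2}$ integral, cf.\ \cite[Proposition~5.1]{BulutCubic}) to obtain the same bound at time $t<s$ over $\{|x|\ge R/N(s)+(s-t)\}$; sending $s\to T^*$ and then $\epsilon\to 0$ yields the support claim with no perturbative comparison whatsoever. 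If you prefer to keep a truncation argument, the workable variant is to cut off \emph{outside} the ball, note the exterior data has small $\dot{H}_x^{3/2}\times\dot{H}_x^{1/2}$ norm, evolve it by small-data global well-posedness, and compare to $u$ in the exterior region via finite-speed uniqueness rather than stability.
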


\begin{proof}
Let $u$ be given as stated and suppose to the contrary that $|I^+|<\infty$.  By the time reversal and scaling symmetries we may assume that $\sup I=1$.  

We first show that
\begin{align}
\supp u(t,\cdot),\quad \supp u_t(t,\cdot)\subset \overline{B(0,1-t)},\quad 0<t<1.\label{suppgoal}
\end{align}
Indeed, the almost periodicity of $u$ in the form of Remark \ref{rem_apalt} gives that for all $\epsilon>0$ there exists $R=R(\epsilon)>0$ such that for every $0<s<1$ we have
\begin{align*}
\int_{|x|\geq \frac{R}{N(s)}} |\nabla u(s,x)|^{5/2}+|u_t(s,x)|^{5/2}dx< \epsilon.
\end{align*}
An invocation of the finite speed of propagation (see, for instance, \cite[Proposition $5.1$]{BulutCubic}) then gives
\begin{align}
\int_{|x|\geq \frac{R}{N(s)}+s-t} |\nabla u(t,x)|^{5/2}+|u_t(t,x)|^{5/2}dx\leq \epsilon\label{eqa}
\end{align}
whenever $0<t<s<1$, yielding
\begin{align*}
\limsup_{s\rightarrow 1} \int_{|x|\geq \frac{R}{N(s)}+s-t} |\nabla u(t,x)|^{5/2}+|u_t(t,x)|^{5/2}dx\leq \epsilon
\end{align*}
for $t\in (0,1)$.  On the other hand, recalling $N(t)\rightarrow\infty$ as $t\rightarrow 1$ (a consequence of the local theory and the almost periodicity), for all $t\in(0,1)$ and $\eta>0$ we have
\begin{align*}
\{x:|x|\geq 1-t+\eta\}\subset \{x:|x|\geq \frac{R}{N(s)}+s-t\}.
\end{align*}
when $s=s(t,\eta)$ is sufficiently close to $1$.  Combining this inclusion with ($\ref{eqa}$) and letting $\eta$ and $\epsilon$ tend to zero, we obtain
\begin{align*}
\int_{|x|\geq 1-t} |\nabla u(t)|^{5/2}+|u_t(t)|^{5/2}dx=0,
\end{align*}
which in turn yields that $(u(t,\cdot)$ is constant a.e. on $\{x:|x|\geq 1-t\}$ as well as $\supp u_t(t,\cdot)\subset \overline{B(0,1-t)}$.  To bound the support of $u$, we note that $u$ belongs to $L_x^\infty L_x^d$, which gives $(\ref{suppgoal})$.

To continue, by ($\ref{suppgoal}$), we write the energy by
\begin{align*}
E(u,u_t)&=\int_{|x|\leq 1-t} \frac{1}{2}|\nabla u(t)|^2+\frac{1}{2}|u_t(t)|^2+\frac{1}{4}|u(t)|^4dx\\
&\lesssim (1-t)[\lVert \nabla u(t)\rVert_{L_x^{5/2}(\mathbb{R}^5)}^2+\lVert u_t(t)\rVert_{L_x^{5/2}(\mathbb{R}^5)}^2+\lVert u(t)\rVert_{L_x^d(\mathbb{R}^5)}^4]\\
&\lesssim_u 1-t
\end{align*}
where we have used the a priori bound $(u,u_t)\in L_t^\infty(\dot{H}_x^{3/2}\times\dot{H}_x^{s_c-1})$.  Letting $t\rightarrow 1$ and using the conservation of energy, we obtain $u\equiv 0$, contradicting its blowup.
\end{proof}

\section{Infinite time blow-up solution}
\label{sec_inftime}
In this section, we consider the second class of solutions identified in Theorem $\ref{thm_char_intervals}$, almost periodic solutions to (NLW) which blow up in infinite time.  By making use of a frequency localized variant of the concentration of potential energy along with the frequency localized Morawetz estimate obtained in Section $\ref{sec_mor}$, we obtain a bound on the length of the maximal interval of existence, contradicting the assumption of infinite time blowup.  When combined with the results of the previous section, this completes the proof of Theorem $\ref{thm_main}$.  In particular, we prove the following theorem:
\begin{theorem}
There is no solution $u$ to (NLW) satisfying the properties of Theorem $\ref{thm_char_intervals}$ with $|I^+|=\infty$.
\end{theorem}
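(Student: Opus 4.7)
The plan is to combine the frequency-localized Morawetz estimate (Lemma~\ref{lem1}) with a uniform lower bound on $\int_{\mathbb{R}^5}|u_{\geq N}(t,x)|^4/|x|\,dx$ coming from a concentration of potential energy for radial almost periodic solutions.  Together these will show that the length $|I_0|$ of any compact subinterval $I_0\subset I^+$ is bounded by a constant depending only on $u$, which is incompatible with $|I^+|=\infty$.

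The key step is to establish the uniform lower bound
\[
\inf_{t\in I^+}\int_{\mathbb{R}^5}\frac{|u(t,x)|^4}{|x|}\,dx\geq c(u)>0.
\]
Observe that this integral is invariant under the natural scaling of (NLW).  For a radial almost periodic solution $x(t)\equiv 0$, and by Remark~\ref{rem_apalt} the rescaled profiles $v(t,y):=N(t)^{-1}u(t,y/N(t))$ lie in a set with compact closure $K$ in $\dot{H}_x^{3/2}\times\dot{H}_x^{1/2}$.  Applying the Hardy-type bound $(\ref{hardy})$ with $p=4$, $\alpha=1$, followed by the Sobolev embedding $\dot{H}_x^{5/4}\hookrightarrow L_x^4$, gives
\[
\int_{\mathbb{R}^5}\frac{|v|^4}{|y|}\,dy\lesssim \lVert|\nabla|^{1/4}v\rVert_{L_x^4}^4\lesssim \lVert v\rVert_{\dot{H}_x^{3/2}}^4,
\]
so the functional $v\mapsto\int|v|^4/|y|\,dy$ is continuous on $\dot{H}_x^{3/2}$.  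This functional cannot vanish on any element of the closure $\overline{K}$: a zero limit profile would yield a sequence $t_n\in I^+$ along which $\lVert(u(t_n),u_t(t_n))\rVert_{\dot{H}_x^{3/2}\times\dot{H}_x^{1/2}}\to 0$, and the small data well-posedness theory would then contradict $\lVert u\rVert_{L_{t,x}^6}=\infty$.  Compactness of $\overline{K}$ therefore furnishes $c(u)>0$.

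Next I would pass from $u$ to $u_{\geq N}$.  Using the pointwise inequality $|u|^4\leq 8(|u_{<N}|^4+|u_{\geq N}|^4)$, it suffices to show
\[
\sup_{t\in I^+}\int_{\mathbb{R}^5}\frac{|u_{<N}(t,x)|^4}{|x|}\,dx\longrightarrow 0\quad\text{as } N\to 0.
\]
This again follows from $(\ref{hardy})$ and Sobolev embedding, which bound the left-hand side by a multiple of $\lVert u_{<N}\rVert_{L_t^\infty\dot{H}_x^{3/2}}^4$, a quantity tending to zero by Remark~\ref{rem_limit}.  Choosing $N$ small enough that this upper bound lies below $c(u)/16$, we obtain $\int|u_{\geq N}(t,x)|^4/|x|\,dx\geq c(u)/16$ for all $t\in I^+$, and integrating in time gives
\[
\int_{I_0}\int_{\mathbb{R}^5}\frac{|u_{\geq N}(t,x)|^4}{|x|}\,dx\,dt\geq \tfrac{c(u)}{16}\,|I_0|
\]
for every compact $I_0\subset I^+$.

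Finally, combining this lower bound with Lemma~\ref{lem1} yields $\tfrac{c(u)}{16}|I_0|\leq \eta\,\widetilde{C}(u)(N^{-1}+|I_0|)$ for all $\eta>0$ and all $N\leq N_0(\eta,u)$.  Fixing $\eta$ small enough that $\eta\widetilde{C}(u)<c(u)/32$ absorbs the $|I_0|$ term on the right and leaves $|I_0|\leq C(u)N^{-1}$.  Since $|I^+|=\infty$ and the intervals $J_k$ cover $I^+$, one may choose a compact $I_0=\cup_{k=1}^{M}J_k$ with $|I_0|$ exceeding $C(u)N^{-1}$, giving the required contradiction.  The principal obstacle is the very first step: ensuring that compactness of $\overline{K}$ yields a strictly positive lower bound for $v\mapsto\int|v|^4/|y|\,dy$, since this demands careful use of the minimality of the blow-up solution and of the small data theory to exclude degenerate profiles in $\overline{K}$.
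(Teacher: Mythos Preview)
Your overall strategy---pair the frequency-localized Morawetz upper bound with a lower bound of the form $\int_{I_0}\int |u_{\geq N}|^4/|x|\,dx\,dt\gtrsim_u |I_0|$---is exactly right, and the passage from $u$ to $u_{\geq N}$ via Remark~\ref{rem_limit} is clean. The gap is in your pointwise-in-time lower bound $\inf_{t}\int|u(t,x)|^4/|x|\,dx\geq c(u)>0$. The functional $v\mapsto \int|v|^4/|y|\,dy$ depends only on the \emph{first} component of the pair $(v,w)\in\dot H^{3/2}\times\dot H^{1/2}$, so its vanishing at a limit point of $\overline{K}$ gives only $v^*=0$, i.e.\ $\lVert u(t_n)\rVert_{\dot H^{3/2}}\to 0$ along a subsequence. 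It does \emph{not} force $w^*=0$, and hence does not give $\lVert(u(t_n),u_t(t_n))\rVert_{\dot H^{3/2}\times\dot H^{1/2}}\to 0$ as you assert. The small-data theory requires smallness of the full pair, so your appeal to it does not close the argument; excluding profiles of the form $(0,w^*)$ with $w^*\neq 0$ from $\overline{K}$ would require a further argument via minimality and long-time perturbation (which you allude to at the end but do not carry out).

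The paper sidesteps this difficulty by working not pointwise in $t$ but with a \emph{time-integrated} concentration bound over each characteristic interval $J_k$: it quotes \cite[Lemma~2.6]{KVradial} to obtain $\int_{J_k}\int_{|x|\leq C/N_k}|u|^4\,dx\,dt\gtrsim_u N_k^{-2}$, removes the low frequencies exactly as you do, and then uses $|x|\leq C/N_k$ to convert this into a lower bound on $\int_{J_k}\int|u_{\geq N}|^4/|x|\,dx\,dt\gtrsim_u N_k^{-1}=\delta^{-1}|J_k|$. Summing over $J_k\subset I_0$ recovers $\gtrsim_u|I_0|$, and the contradiction follows as in your final paragraph. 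The time-averaged version is the natural fix: integrating the (scale-invariant) functional over a characteristic interval of rescaled length $\delta$ produces a continuous functional on $\overline{K}$ that vanishes only when the \emph{solution} is identically zero on that interval, which genuinely forces both components of the data to vanish.
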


\begin{proof}
Suppose to the contrary that such a solution $u$ existed.  We begin by showing that there exists $C>0$ and $N_0>0$ such that for all $N\leq N_0$ and every $k\geq 1$,
\begin{align}
\int_{J_k}\int_{|x|\leq C/N_k} |u_{\geq N}(t,x)|^4dxdt\gtrsim_u N_k^{-2}.\label{eqa3a}
\end{align}
To show this claim, we recall that \cite[Lemma $2.6$]{KVradial} gives the existence of $C>0$ such that for every $k\geq 1$,
\begin{align*}
\int_{J_k}\int_{|x|\leq C/N_k} |u(t,x)|^4dxdt\gtrsim_u N_k^{-2}.
\end{align*}
An application of Minkowski's inequality then gives
\begin{align}
\nonumber &\left(\int_{J_k}\int_{|x|\leq C/N_k} |u_{\geq N}(t)|^4dxdt\right)^{1/4}\\
\nonumber &\hspace{0.2in}=\left(\int_{J_k}\int_{|x|\leq C/N_k} |u(t)-u_{\leq N/2}(t)|^4dxdt\right)^{1/4}\\
\nonumber &\hspace{0.2in}\geq \left(\int_{J_k}\int_{|x|\leq C/N_k} |u(t)|^4dxdt\right)^{1/4}-\left(\int_{J_k}\int_{|x|\leq C/N_k} |u_{\leq N/2}(t)|^4dxdt\right)^{1/4}\\
\label{eqa3}&\hspace{0.2in}\gtrsim_u N_k^{-1/2}-\left(\int_{J_k}\int_{|x|\leq C/N_k} |u_{\leq N/2}(t)|^4dxdt\right)^{1/4}.
\end{align}
On the other hand, fixing $\eta_1>0$ and applying H\"older's inequality along with Remark $\ref{rem_limit}$, we obtain that for $N$ sufficiently small
\begin{align*}
\int_{|x|\leq C/N_k} |u_{\leq N/2}(t)|^4dx&\lesssim_u N_k^{-1}\left(\int_{|x|\leq C/N_k} |u_{\leq N/2}(t)|^{5}dx\right)^{4/5}\\
&\lesssim_u N_k^{-1}\lVert u_{\leq N/2}\rVert_{L_t^\infty\dot{H}_x^{3/2}}^4\lesssim_u \frac{\eta_1^4}{N_k}.
\end{align*}
This implies the bound
\begin{align*}
\int_{J_k}\int_{|x|\leq C/N_k} |u_{\leq N/2}(t,x)|^4dxdt&\lesssim \frac{\delta\eta_1^4}{N_k^2},
\end{align*}
so that, after choosing $\eta_1$ sufficiently small and substituting this bound into $(\ref{eqa3})$, we obtain ($\ref{eqa3a}$).

We now fix $\eta>0$ to be determined later in the argument and recall that Lemma $\ref{lem1}$ implies the existence of $N_1\in (0,N_0)$ such that for all $N\leq N_1$ and $I_0=\cup J_k\subset I$ compact,
\begin{align}
\int_{I_0}\int_{\mathbb{R}^5} \frac{|u_{\geq N}(t)|^4}{|x|}dxdt&\lesssim_u\eta(N^{-1}+|I_0|) .\label{eq1a}
\end{align}
Combining $(\ref{eq1a}$) with $(\ref{eqa3a})$ then gives
\begin{align}
\nonumber \eta\bigg(N^{-1}+|I_0|\bigg)&\gtrsim_u \sum_{J_k\subset I_0} \int_{J_k}\int_{|x|\leq C/N_k} \frac{|u_{\geq N}(t)|^4}{|x|}dxdt\\
\nonumber &\gtrsim_u \sum_{J_k\subset I_0} N_k\int_{J_k}\int_{|x|\leq C/N_k} |u_{\geq N}(t)|^4dxdt\\
\nonumber &\gtrsim_u \sum_{J_k\subset I_0} N_k^{-1}\\
\label{eqasdf}&\gtrsim_u |I_0|-2\delta
\end{align}
for all $N\leq N_1$.  Choosing $\eta$ sufficiently small (depending on the constant in ($\ref{eqasdf}$)), we obtain the bound
\begin{align*}
|I_0|&\lesssim_u N^{-1}+1
\end{align*}
for all $N\leq N_1$ and all $I_0$.  Fixing $N$ and letting $I_0$ tend to $I^+$ then gives the desired contradiction.
\end{proof}


\begin{thebibliography}{9}
\bibitem{Beckner} W. Beckner, Pitt's inequality with sharp convolution estimates. Proc. Amer. Math. Soc. 136 (2008), 1871-1885.
\bibitem{BulutCubic} A. Bulut, Global Well-Posedness and Scattering for the Defocusing Energy-Supercritical Cubic Nonlinear Wave Equation. Preprint (2010) arXiv:1006.4168.
\bibitem{ChristWeinstein} F.M. Christ and M. Weinstein, I. Dispersion of small amplitude solutions of the generalized Korteweg-de Vries equation. J. Funct. Anal.  100  (1991), no. 1, 87--109.
\bibitem{CKSTT} J. Colliander, M. Keel, G. Staffilani, H. Takaoka, and T. Tao, Global well-posedness and scattering for the energy-critical nonlinear Schr\"odinger equation in $\mathbb{R}^3$. Ann. Math. 167 (2008), 767-865.
\bibitem{Dodson} B. Dodson, Global Well-Posedness and Scattering for the Defocusing, $L^2$-critical, Nonlinear Schr\"odinger Equation When $d\geq 3$. Preprint (2009). arXiv:0912.2467.
\bibitem{GinibreVelo} J. Ginibre and G. Velo, Generalized Strichartz inequalities for the wave equation.  J. Funct. Anal. 133 (1995), no 1, 50-68.
\bibitem{KeelTao}  M. Keel and T. Tao, Endpoint Strichartz estimates. Amer. J. Math.  120  (1998), no. 5, 955--980.
\bibitem{KenigMerleNLS} C. Kenig and F. Merle, Global well-posedness, scattering and blow-up for the energycritical, focusing, non-linear Schr\"odinger equation in the radial case, Invent. Math. 166 (2006), 645-675.
\bibitem{KenigMerleNLW} C. Kenig and F. Merle, Global well-posedness, scattering and blow-up for the energy critical focusing non-linear wave equation. Acta Math. 201 (2008), 147-212.
\bibitem{KM} C. Kenig and F. Merle, Nondispersive radial solutions to energy supercritical non-linear wave equations, with applications.  Amer. J. Math. (2008)
\bibitem{KenigMerleOddDimensions} C. Kenig and F. Merle.  Radial solutions to energy supercritical wave equations in all odd dimensions. Preprint. (2011)
\bibitem{KenigPonceVega} C. Kenig and G. Ponce, and L. Vega, Well-posedness and scattering results for the generalized Korteweg-de Vries equation via the contraction principle. Comm. Pure Appl. Math., 46(4):527-620, 1993.
\bibitem{KillipTaoVisan} R. Killip, T. Tao and M. Visan, The cubic nonlinear Schr\"odinger equation in two dimensions with radial data. J. Eur. Math. Soc. (JEMS)  11  (2009), no. 6, 1203--1258.
\bibitem{KillipVisanECritical} R. Killip and M. Visan, The focusing energy-critical nonlinear Schrodinger equation in dimensions five and higher.  Amer. J. Math. 132 (2009), pp. 361-424.
\bibitem{KillipVisanClayNotes} R. Killip and M. Visan, Nonlinear Schr\"odinger Equations at Critical Regularity.  Clay Mathematics Institute Lecture Notes.  (2009)
\bibitem{KillipVisanSupercriticalNLS} R. Killip and M. Visan, Energy-supercritical NLS: critical $\dot{H}^s$-bounds imply scattering.  Comm. Par. Diff. Eq. (2010).
\bibitem{KV3} R. Killip and M. Visan, The defocusing energy-supercritical nonlinear wave equation in three space dimensions.  Trans. Amer. Math. Soc. 363 (2011), 3893-3934.
\bibitem{KVradial} R. Killip and M. Visan, The radial defocusing energy-supercritical nonlinear wave equation in all space dimensions.  Proc. Amer. Math. Soc. 139 (2011), 1805-1817.
\bibitem{KV3nls} R. Killip and M. Visan, Global well-posedness and scattering for the defocusing quintic NLS in three dimensions.  Preprint. (2011) arXiv:1102.1192.
\bibitem{M} C. Morawetz, Notes on time decay and scattering for some hyperbolic problems. Regional Conference Series in Applied Mathematics, No. 19. Society for Industrial and Applied Mathematics, Philadelphia, PA, 1975.
\bibitem{M2} C. Morawetz and W. Strauss, Decay and scattering of solutions of a nonlinear relativistic wave equation. Comm. Pure Appl. Math. 25 (1972), 1-31.
\bibitem{Visan4} M. Visan, Global well-posedness and scattering for the defocusing cubic NLS in four dimensions. Int. Math. Res. Not. (2011)
\end{thebibliography}
\end{document}